\newtheorem{theorem}{Theorem}[section]
\newtheorem{thm}[theorem]{Theorem}
\newtheorem{prop}{Proposition}[section]
\newtheorem{cor}[prop]{Corollary}
\newtheorem{lemma}[prop]{Lemma}
\newtheorem{remark}[prop]{Remark}
\newtheorem{defi}[prop]{Definition}
\newtheorem{example}[prop]{Example}
\numberwithin{equation}{section}
\def\R{\mathbb R}
\def\mathscr{\mathcal }
\def\diag{\text{diag}}
\def\mi{\mathbf {i}}
\newcommand{\ba}{{\mathbf{a}}}
\newcommand{\bd}{{\mathbf{d}}}
\newcommand{\be}{{\mathbf{e}}}
\newcommand{\bh}{{\mathbf{h}}}
\newcommand{\bD}{{\mathcal{D}}}
\newcommand{\SD}{{\mathcal D}}
\begin{document}

\title{Dimension drop of connected part   Of slicing self-affine  Sponges}

\author{Yan-fang Zhang}
\address{School of Science, Huzhou University, Huzhou, 313000, China;}
\email{03002@zjhu.edu.cn}
\author{Yan-li Xu$\dag$}
\address{Department of Mathematics and Statistics, Central China Normal University, Wuhan,430079, China.}
\email{xu\_yl@mails.ccnu.edu.cn}

\date{\today}
\thanks{The work is supported by the start-up research fund from Huzhou University. No. RK21089.}

 \begin{abstract}
   The connected part of a metric space $E$ is defined to be the union of non-trivial connected components of $E$. We proved that for a class of self-affine sets called slicing self-affine sponges, the connected part of $E$ either coincides with $E$, or is essentially contained in the attractor of  a proper sub-IFS of an iteration of the original IFS.
This generalize an early result of  Huang and Rao  [L. Y. Huang, H. Rao. \emph{A dimension drop phenomenon of fractal cubes}, J. Math. Anal. Appl. \textbf{497} (2021), no. 2] on  a class of self-similar sets called fractal cubes. Moreover, we show that  the result is no longer valid if the slicing property is removed.
    Consequently, for a Bara\'nski carpet $E$,  the Hausdorff dimension and the box dimension of the connected part of $E$ are strictly less than the Hausdorff dimension and the box dimension of $E$, respectively. For slicing self-affine sponges in $\mathbb R^d$ with $d\geq 3$,
    whether the attractor of a sub-IFS has strictly smaller dimensions is an open problem.
 \end{abstract}

\thanks{$\dag$ The corresponding author}
 \subjclass[msc2000]{Primary: 28A80 Secondary: {26A16}}
\keywords{connectedness index, slicing self-affine sponge, Lipschitz equivalence}

\maketitle

\section{\textbf{Introduction}}
Usually a fractal set contains both trivial connected components and non-trivial connected components,
where  a connected component is called trivial if it is a singleton.
 Let $X$ be a metric space. Let $X_c$ be the union of non-trivial connected components and we call it
 the \emph{connected part} of $X$.
 Huang and Rao \cite{HR20} introduced a notion of connectedness index of  $X$  as
\begin{equation}\label{eq:Cindex}
\text{ind}_H X=\dim_H X_c,
\end{equation}
where $\dim_H$ denotes the Hausdorff dimension. In the following, we will call $\text{ind}_H X$
 the \emph{Hausdorff connectedness index} of $X$. It is shown that (\cite{HR20}) if $X$ is a fractal cube,
 that is, $X\subset \R^d$ is the non-empty compact set satisfying the set equation
 $
 X=\bigcup_{\bd\in \SD}(X+\bd)/n,
 $
 where $n\geq 2$ is an integer and $\SD\subset \{0,1,\dots, n-1\}^d$,
 then either $X_c=X$ or $\text{ind}_H X=\dim_H X_c<\dim_H X$.

 This result has several interesting consequences. Let $X$ be a fractal cube possessing trivial points.
 First, the topological Hausdorff dimension of $X$ is less than $\dim_H X$,  since
 the former one is no larger than $\text{ind}_H$, see \cite{HR20} and \cite{YZ20}. (The topological Hausdorff dimension is a new dimension introduced by \cite{BBE15}.)
 Secondly,  the gap sequence of $X$ is comparable to $(k^{-\beta})_{k\geq 1}$
 where $\beta=\dim_H X$, see \cite{HZ20}.
 Thirdly, $\text{ind}_H X$ provides a new Lipschitz invariant.

The aim of the present paper is to generalize the results of \cite{HR20} to a larger class of fractals
called slicing self-affine sponges.

\begin{defi}[Diagonal IFS (\cite{Das16})]\label{IFS}\emph{
Fix $d\geq1$. For each $i\in \{1,\ldots,d\}$, let $A_i=\{0,1,\dots, n_i-1\}$ with $n_i\geq 2$, and let $\Phi_i=(\phi_{a,i})_{a\in A_i}$ be a collection of contracting similarities of $[0,1]$, called
the \emph{base IFS in coordinate} $i$.
Let $A =\prod _{i=1}^d A_i$, and for each $\textbf{a}=(a_1,\ldots,a_d)\in A,$ consider the contracting affine maps $\phi_\textbf{a}$:$[0,1]^d \rightarrow [0,1]^d$ defined by the formula
$${\phi_\textbf{a}}\left({{x_1},\ldots,{x_d}}\right)=\left({{\phi_{\textbf{a},1}}\left({{x_1}}\right),\ldots,{\phi_{\textbf{a},d}}\left({{x_d}}\right)}\right),$$
where $\phi_{\textbf{a},i}$ is shorthand for $\phi_{a_i,i}$ in the formula above.
Then we can get
$${\phi_\textbf{a}}\left({{{[0,1]}^d}}\right)=\mathop\Pi\limits_{i=1}^d{\phi_{\textbf{a},i}}\left({\left[{0,1}\right]}\right)\subset{\left[{0,1}\right]^d}.$$
Given $\SD\subset A$, we call the collection $\Phi={\left({{\phi_\ba}}\right)_{\ba\in \SD}}$ a \emph{diagonal IFS}, and we call its invariant set $\Lambda_\Phi$ a \emph{diagonal self-affine sponge}.
}
\end{defi}

A diagonal self-affine IFS  $\Phi$  is called  a \emph{slicing self-affine IFS},
  if for each $1\leq i \leq d$,
  $$[0,1)=\phi_{0,i}[0,1)\cup \cdots \cup \phi_{n_i-1,i}[0,1)$$
  is a partition of $[0,1)$ from left to right;
  in this case, we call  $\Lambda_\Phi$   a \emph{slicing self-affine sponge}.
   In the two dimensional case,   $\Lambda_\Phi$ is called
  a \emph{Bara\'nski carpet} (\cite{Das16}).

Let $\Lambda_\Phi$ be a slicing self-affine sponge.
If for each $i$, the maps in $\Phi_i$ have equal contraction ratio $1/n_i$, then $\Lambda_\Phi$
 is called  a \emph{Sierpi\'nski self-affine sponge}, see Kenyon and Peres \cite{KP96measure} and Olsen \cite{Olsen07}.
 There is a simple way to define a Sierpi\'nski self-affine sponge.
  Let $M =\text{diag}(n_1,\dots, n_d)$ and let $\SD\subset \prod_{i=1}^d \{0,1,\dots, n_i-1\}$.
Then $M$ and $\mathcal {D}$ determine an IFS
\begin{equation}\label{eq:sier}
 {\left\{ {{\lambda _\bd}\left( z \right) = {M^{ - 1}}\left( {z + \bd} \right)} \right\}_{\bd \in \mathcal {D}}}.
\end{equation}
Then its invariant set  $K = K\left( {M,\mathcal {D}} \right)$   is a Sierpi\'nski self-affine sponge.
 If $M$ and $\SD$ are obtained from a slicing IFS $\Phi$, then  we call it the \emph{associated Sierpi\'nski self-affine sponge} of   $\Phi$.

\begin{remark}
\emph{ There are a lot of works on dimensions, multifractal anlaysis
and other topics of  diagonal self-affine sponges,  see for instance,
McMullen \cite{Mc84}, Bedford \cite{Bed84}, Lalley and Gatzouras   \cite{Lalley92}, King \cite{King95},
Kenyon and Peres \cite{KP96sofic, KP96measure},
Feng and Wang \cite{FengWang05},
Bara\'nski \cite{Baranski07}, Olsen \cite{Olsen07}, Barral and Mensi  \cite{Bar07},
Jordan and Rams \cite{JR11}, Mackay \cite{MM11}, Das and Simmons \cite{Das16}, Fraser \cite{Fraser2017},
Miao, Xi and Xiong \cite{Miao2017}, Rao, Yang and Zhang \cite{Rao2019}.
}
\end{remark}

Our main results are as follows.
First, we prove that a slicing self-affine sponge is always homeomorphic to its associated Sierpi\'nski
self-affine sponge. We use $\partial E$ to denote the  boundary of a set $E$.

\begin{thm}\label{thm:homeo} %Let $d\geq 1$ and $n_1,\dots, n_d$ be integers greater than $1$.
Let $\Lambda_\Phi$ be a $d$-dimensional slicing self-affine sponge generated by $\Phi=(\phi_{\ba})_{\ba\in \SD}$,
and  $K(M,\SD)$ be the associated Sierpi\'nski self-affine sponge of $\Phi$.
Then
there exists a map $F: \Lambda_\Phi\to K(M,\SD)$   which is bi-Holder continuous, and
$F(\Lambda_\Phi\cap \partial [0,1]^d)\subset \partial [0,1]^d$.
\end{thm}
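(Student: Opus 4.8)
The plan is to build $F$ one coordinate at a time. For each $i$, the base IFS $\Phi_i=(\phi_{a,i})_{a\in A_i}$ and the equal-spacing maps $\lambda_{a,i}(t)=(t+a)/n_i$ (the $i$-th coordinate of \eqref{eq:sier}) both have $[0,1]$ as invariant set and both tile it from left to right using the same digit set $A_i$. I would let $f_i\colon[0,1]\to[0,1]$ be the unique solution of the conjugacy equations $f_i\circ\phi_{a,i}=\lambda_{a,i}\circ f_i$ for $a\in A_i$, equivalently the order-preserving bijection carrying each slicing cylinder $I_w=\phi_{w_1,i}\cdots\phi_{w_k,i}[0,1]$ onto the corresponding equal-spacing cylinder $J_w=\lambda_{w_1,i}\cdots\lambda_{w_k,i}[0,1]$. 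First I would check that $f_i$ is well defined and a homeomorphism: in both systems the only coincidences of addresses occur at the shared endpoint of two neighbouring cylinders, coded by $w\,a\,(n_i-1)^\infty$ and $w\,(a+1)\,0^\infty$, so the two tilings have identical identifications and matching cylinders respects them. Monotonicity together with the fact that cylinder diameters tend to $0$ then produces a continuous increasing bijection of $[0,1]$ fixing $0$ and $1$.

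Next I would set $F=(f_1,\dots,f_d)\colon[0,1]^d\to[0,1]^d$ and verify $F(\Lambda_\Phi)=K(M,\SD)$. Writing $\pi_i$ and $\tilde\pi_i$ for the coding maps of $\Phi_i$ and of the $i$-th equal-spacing system, the cylinder-matching gives $f_i\circ\pi_i=\tilde\pi_i$. Since $\Phi$ is diagonal, a point $x\in\Lambda_\Phi$ with address $(\ba_n)_{n\ge 1}\in\SD^{\N}$ has $x_i=\pi_i((a_{n,i})_n)$, whence $f_i(x_i)=\tilde\pi_i((a_{n,i})_n)$ and $F(x)=\tilde\pi((\ba_n))\in K(M,\SD)$, where $\tilde\pi$ is the Sierpi\'nski coding of \eqref{eq:sier}. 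As $(\ba_n)$ ranges over $\SD^{\N}$ this is a bijection onto $K(M,\SD)$, so $F$ restricts to a homeomorphism $\Lambda_\Phi\to K(M,\SD)$.

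The main work, and the step I expect to be the genuine obstacle, is the bi-H\"older estimate for each $f_i$. Because the base IFS tiles $[0,1]$ with no gaps, the naive longest-common-prefix argument fails: two points in adjacent cylinders can be arbitrarily close while sharing only a short prefix. Instead I would fix $x<x'$ and let $k$ be the largest level at which $x$ and $x'$ lie in two slicing cylinders that are equal or adjacent. By maximality, at level $k+1$ at least one full slicing cylinder lies strictly between them, so $|x-x'|\ge r_{\min,i}^{\,k+1}$ with $r_{\min,i}=\min_a r_{a,i}$; on the other hand $f_i(x)$ and $f_i(x')$ lie in two equal or adjacent level-$k$ cylinders of the equal-spacing system, so $|f_i(x)-f_i(x')|\le 2n_i^{-k}$. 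Combining the two bounds and using $r_{\min,i}^{\,\alpha_i}=n_i^{-1}$ yields $|f_i(x)-f_i(x')|\le 2n_i\,|x-x'|^{\alpha_i}$ with $\alpha_i=\log n_i/\log(1/r_{\min,i})\in(0,1]$. Running the identical argument for $f_i^{-1}$, where now the equal-spacing system has no gaps and a full slicing cylinder of level $k+1$ lies between the images, gives the reverse bound with exponent $\beta_i=\log(1/r_{\max,i})/\log n_i\in(0,1]$. Taking the minimum exponent over the finitely many coordinates shows $F$ is bi-H\"older on $[0,1]^d$, hence on $\Lambda_\Phi$.

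Finally, each $f_i$ is an increasing bijection of $[0,1]$ fixing the endpoints, so $t\in\{0,1\}$ if and only if $f_i(t)\in\{0,1\}$; therefore $F(\partial[0,1]^d)=\partial[0,1]^d$, and in particular $F(\Lambda_\Phi\cap\partial[0,1]^d)=K(M,\SD)\cap\partial[0,1]^d\subset\partial[0,1]^d$, which is the required boundary condition.
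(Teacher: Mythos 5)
Your proposal is correct and takes essentially the same approach as the paper: there, too, the map is built coordinate-by-coordinate as the coding-preserving map between the slicing system and the equal-spacing system (the paper sets $F=\pi'\circ\pi^{-1}$), and the bi-H\"older estimate hinges on exactly the adjacency obstruction you isolate --- the critical index $k^*$ in the paper's Lemma 2.1, defined by the run of digits $0$ versus $n-1$ after the first disagreement, is the symbolic counterpart of your ``largest level at which the two cylinders are equal or adjacent.'' The only differences are presentational: you extend $F$ to a monotone self-homeomorphism of $[0,1]^d$ and argue geometrically with cylinders (deducing the boundary condition from monotonicity), while the paper works only on $\Lambda_\Phi$ with digit strings and reads the boundary condition off the all-$0$ or all-$(n_j-1)$ coordinate codings.
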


A slicing  self-affine sponge is said to be \emph{degenerated}, if it is contained in a $(d-1)$-face
of $[0,1]^d$. (For definition of face of a convex body, see Section 3.)
Next, we show that

\begin{thm}\label{thm:inner} Let $\Lambda_\Phi$ be a slicing self-affine sponge.
If $\Lambda_\Phi$ is non-degenerated and  possesses trivial points, then $\Lambda_\Phi$
has a trivial point in $(0,1)^d$.
\end{thm}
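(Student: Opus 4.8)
The plan is to reduce to the Sierpi\'nski case via Theorem~\ref{thm:homeo} and then transport a given trivial point into the interior while keeping it trivial. By Theorem~\ref{thm:homeo} there is a bi-Hölder homeomorphism $F:\Lambda_\Phi\to K(M,\SD)$ with $F(\Lambda_\Phi\cap\partial[0,1]^d)\subseteq\partial[0,1]^d$. Since a homeomorphism carries connected components to connected components, $F$ maps trivial points bijectively to trivial points; moreover $F(\Lambda_\Phi\cap\partial[0,1]^d)\subseteq\partial[0,1]^d$ gives $F^{-1}\big(K(M,\SD)\cap(0,1)^d\big)\subseteq\Lambda_\Phi\cap(0,1)^d$. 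As non-degeneracy and the existence of trivial points depend only on $\SD$, it suffices to prove the statement for the associated Sierpi\'nski sponge $K:=K(M,\SD)$, where all level-$m$ cylinders are boxes of equal shape. I write $\phi_\bw$ for the level-$m$ map attached to $\bw=(\ba^{(1)},\dots,\ba^{(m)})\in\SD^m$, and record two facts. First, each $\phi_\bw$ is a homeomorphism of $K$ onto $\phi_\bw(K)$, so it sends a trivial point to a point whose component inside the cylinder $\phi_\bw(K)$ is a singleton, and conversely the shift of a trivial point is trivial. Second, non-degeneracy is exactly the condition that for every $i$ the digit set $\SD_i=\{a_i:\ba\in\SD\}$ is neither $\{0\}$ nor $\{n_i-1\}$; this guarantees both $K\cap(0,1)^d\neq\emptyset$ and the existence of words $\bu$ with $\phi_\bu([0,1]^d)\subseteq(0,1)^d$.

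Next I characterise triviality topologically. Since $K$ is compact, the component of a point equals its quasi-component, so $x_0$ is trivial iff it has arbitrarily small clopen neighbourhoods. If $U\ni x_0$ is clopen then $\dist(U,K\setminus U)>0$, hence for $m$ large every nonempty level-$m$ cylinder lies entirely in $U$ or entirely in $K\setminus U$; thus $U=\bigcup_{\bw\in S}\phi_\bw(K)$ for some $S\subseteq\SD^m$ whose cylinders are separated from the remaining occupied cylinders by empty cells. In particular $x_0$ sits in a clopen ``island'' $U$, and by shrinking $U$ I may assume it meets the face $\{x_j=0\}$ (resp. $\{x_j=1\}$) only for those coordinates $j$ with $x_{0,j}=0$ (resp. $x_{0,j}=1$).

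The transport step is as follows. Choose $\bu$ with $\phi_\bu([0,1]^d)\subseteq(0,1)^d$; then $\phi_\bu(U)\subseteq(0,1)^d$ is clopen inside $\phi_\bu(K)$ and, because $\phi_\bu$ maps faces to faces, it touches $\partial\phi_\bu([0,1]^d)$ only along the faces corresponding to the extreme coordinates of $x_0$. If across each such face the neighbouring level-$|\bu|$ cylinder is empty, then $\phi_\bu(U)$ meets no other cylinder and is therefore clopen in all of $K$; since the component of $\phi_\bu(x_0)$ inside the island $\phi_\bu(U)$ is the single point $\phi_\bu(x_0)$, this exhibits $\phi_\bu(x_0)\in(0,1)^d$ as an interior trivial point. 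Thus everything reduces to producing, for some trivial $x_0$, a word $\bu$ whose cylinder is interior and has an \emph{empty neighbour} on each side where the island meets $\partial[0,1]^d$.

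Constructing $\bu$ is the crux, and both hypotheses enter here. In a coordinate direction $j$ with $\pi_j(K)\neq[0,1]$ there is a genuine gap, and one can choose the digits of $\bu$ so that the relevant face lands immediately beside that gap, producing the required empty neighbour, while non-degeneracy supplies the interior room in the remaining coordinates. The main obstacle is a direction $j$ with $\pi_j(K)=[0,1]$: there are no empty cells in that direction at any level, so the relocated face is necessarily shared with an occupied cylinder and cannot be isolated by a gap. In that case triviality of $x_0$ must be used in its strong topological form rather than as a mere digit gap — one must show that, although the neighbouring cylinder is occupied, it carries no point at the relocated location, by transporting the separating clopen set of $x_0$ and reading off that the remaining coordinates cannot simultaneously match. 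Managing all extreme coordinates of $x_0$ at once, and in particular selecting the trivial point with the fewest extreme coordinates so that these full-projection directions are as few as possible, is the delicate part of the argument.
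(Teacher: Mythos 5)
Your reduction to the associated Sierpi\'nski sponge via Theorem \ref{thm:homeo}, and your characterization of trivial points by arbitrarily small clopen neighbourhoods (each a finite union of deep cylinders), are correct and consistent with the paper's setup. But the proof has a genuine gap exactly at the step you yourself flag as ``the crux'': you never construct the word $\bu$, and in the case where empty neighbouring cells are unavailable you only state what ``must be shown'' --- that the occupied neighbouring cylinder ``carries no point at the relocated location'', and that one should ``select the trivial point with the fewest extreme coordinates'' --- without giving any argument. This unhandled case is not peripheral: for many slicing sponges (e.g.\ any Bedford--McMullen carpet in which every row and every column is occupied) every coordinate projection is onto, no gap-placement is ever possible, and your scheme reduces the theorem to precisely its hardest instance and stops there.

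The paper closes this hole with two devices your proposal lacks. First, an induction on the dimension of the containing face $F$ of the trivial point (Lemma \ref{dim-increase}): one never tries to reach $(0,1)^d$ in one jump, but only produces, from a trivial point $z_0$ with containing face $F$, another trivial point of the form $\Phi_{\omega^*}(z_0)$ lying off $F$; by Lemma \ref{dim-increase} its containing face has dimension at least $\dim F+1$, so after at most $d$ steps one lands in $(0,1)^d$. This is what makes ``managing all extreme coordinates at once'' unnecessary. Second, the mechanism that produces the new trivial point without any empty neighbours is Lemma \ref{another-z1}: consider the slab $H_\sigma$ of all $k$-th cells sharing the $\pi_A$-projection of the cell containing $z_0$; at most one cell of this slab can meet $F$, so once the slab is disconnected or disjoint from $F$ --- which is arranged by prepending a digit $j$ with $\Phi_j([0,1]^d)\cap F=\emptyset$ (supplied by non-degeneracy) to a prefix of a coding of $z_0$ whose connected component in $K_p$ has diameter less than $1/3$ --- one selects an extremal cell $\omega^*$ in a component disjoint from $F$ by a min/max choice over the $B$-coordinates, and triviality of $\Phi_{\omega^*}(z_0)$ follows as in Lemma 3.2 of \cite{HR20}. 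Your proposal would need an argument of comparable substance for the full-projection directions; as written, the central step of the theorem is described but not proved.
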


\begin{remark}\emph{Thanks to Theorem \ref{thm:homeo}, we only need to proof Theorem \ref{thm:inner}
for Sierpi\'nski self-affine sponge, which generalizes the main result of \cite{HR20}. The key point in our
proof is that we build up Lemma 3.3, which allows us to generalize and simplify the approach of \cite{HR20}.
Besides, our definition of degeneration of self-affine sponge is more suitable for the purpose than that in \cite{HR20}.
}
\end{remark}

For $\omega=\omega_1\dots \omega_k\in \SD^k$, we denote $\phi_{\omega}=\phi_{\omega_1}\circ\cdots \circ \phi_{\omega_k}$.
Thanks to Theorem \ref{thm:inner}, we can show that

\begin{thm}\label{thm:subIFS}  Let $\Lambda_\Phi$ be a slicing self-affine sponge with digit set $\SD$.
If $\Lambda_\Phi$ possesses trivial points, then there exists a proper sub-IFS of an iteration of $\Phi$ with invariant set $\Lambda'$ such that
 the connected part of $\Lambda_\Phi$ is contained in $\bigcup_{\omega\in \cup_{k\geq 0} \SD^k}\phi_\omega(\Lambda')$
\end{thm}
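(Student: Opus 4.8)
The plan is to reduce to the associated Sierpi\'nski sponge, isolate one \emph{forbidden} block of digits, and show that the connected part consists exactly of the points whose codes use that block only finitely often.

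\emph{Reduction.} First I would pass from $\Lambda_\Phi$ to $K=K(M,\SD)$. Both sponges are coded by $\SD^{\N}$ through their respective coding maps, and by Theorem~\ref{thm:homeo} they are homeomorphic by a coding-respecting map carrying $\Lambda_\Phi\cap\partial[0,1]^d$ into $\partial[0,1]^d$, which identifies the piece $\phi_\omega(\Lambda_\Phi)$ with the cell of $K$ of code $\omega$. Since membership in a non-trivial connected component is purely topological, a proper sub-IFS of an iteration for $K$, together with the containment of its connected part, pulls back verbatim to $\Lambda_\Phi$, so it suffices to argue for $K$. I would also remove the degenerate case by induction on $d$: a degenerate sponge lies in a $(d-1)$-face, is again a slicing sponge of dimension $d-1$ with the same connected part, and the case $d=1$ is immediate, since a one-dimensional slicing sponge with a trivial point must omit a digit, contains no interval, and hence has empty connected part. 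So I may assume $K$ is non-degenerate.

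\emph{The mechanism and assembly.} Suppose I can produce a level $k$ and a word $\gamma\in\SD^k$ whose cell $R=\phi_\gamma([0,1]^d)$ is \emph{isolated and interior}: $R\subset(0,1)^d$ and $R\cap K$ meets no other level-$k$ cell of $K$, so that $\phi_\gamma(K)=R\cap K$ is clopen in $K$. Interiority is scale-invariant, so for \emph{every} word $\omega$ the copy $\phi_{\omega\gamma}([0,1]^d)$ again lies in the interior of its enclosing cell $\phi_\omega([0,1]^d)$ and thus cannot link to anything outside it; combined with isolation inside that enclosing cell, this makes $\phi_{\omega\gamma}(K)$ clopen in $K$ as well. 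Consequently, whenever $\gamma$ occurs as a block of $k$ consecutive digits in the code of a point $y$, the connected component of $y$ is trapped inside the corresponding copy $\phi_{\omega\gamma}(K)$; if $\gamma$ occurs infinitely often along the code, these trapping copies shrink to $\{y\}$ and $y$ is trivial. Equivalently, every non-trivial point has $\gamma$ occurring only finitely often, and taking $\omega$ to be its code-prefix up to the last such occurrence exhibits it as a point of $\phi_\omega(\Lambda')$, where $\Lambda'$ is the invariant set of the sub-IFS of $\Phi^k$ with digit set $\SD''=\SD^k\setminus\{\gamma\}\subsetneq\SD^k$. This gives $(\Lambda_\Phi)_c\subset\bigcup_{\omega\in\cup_{k\geq0}\SD^k}\phi_\omega(\Lambda')$, and properness of the sub-IFS is precisely the existence of $\gamma$.

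\emph{The crux.} Everything therefore reduces to producing an isolated interior cell, and this is the step I expect to be hard; it is the role of the adjacency analysis announced as Lemma~3.3 in the excerpt. By Theorem~\ref{thm:inner} there is a trivial point $x_0\in(0,1)^d$, with nested cells $C_1\supset C_2\supset\cdots$ coding it, all interior for large level; the task is to upgrade the triviality of the single point $x_0$ to the isolation of an entire cell $C_k$ at some such level. Here both hypotheses are indispensable and inseparable: the slicing property forces the cells to \emph{partition} the cube, so that a broken link is a genuine gap on every side and a pinching of the component at $x_0$ must already be witnessed by a surrounding layer of empty cells at a definite scale; interiority then keeps that separating layer away from $\partial[0,1]^d$, where a contact could otherwise reconnect the cell in some affine copy. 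This is exactly where the argument collapses once slicing is dropped and overlapping pieces can pinch a component without ever isolating a cell, which is the source of the counterexample promised in the abstract. Making the passage from ``trivial interior point'' to ``isolated interior cell'' precise, via the recursive face-matching that decides when two adjacent cells are joined inside $K$, is the heart of the proof; the coding bookkeeping in the assembly step above is routine by comparison.
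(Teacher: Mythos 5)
Your reduction to the Sierpi\'nski sponge and your tail-of-the-coding assembly are sound and essentially the same as the paper's own proof, which likewise invokes Theorems~\ref{thm:homeo} and \ref{thm:inner}, passes to an iterate of $\Phi$, deletes a block of digits, and observes that a code hitting the deleted block infinitely often gives a trivial point. But your proposal stops exactly at the theorem's real content: you write ``Suppose I can produce a level $k$ and a word $\gamma\in\SD^k$ whose cell is isolated and interior'' and never produce it; your final paragraph is a heuristic discussion of why slicing should enter there, not an argument. At the corresponding point the paper uses a different, strictly weaker object: a $k$-th \emph{island}, i.e.\ a connected component $U$ of the approximation $\Lambda_k$ with $U\cap\partial[0,1]^d=\emptyset$ (whose existence, given an interior trivial point, it quotes from \cite{HR20}), and it deletes \emph{all} the digits $J$ of the cells forming $U$, not a single word.

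This is not a cosmetic difference: the single isolated interior cell you postulate need not exist, so the gap cannot be closed in the form you set it up. Take $d=2$, $M=\diag(9,9)$ and $\SD=\{(i,0);\ 0\le i\le 8\}\cup\{(3,4),(4,4)\}$. Since the bottom row is full, $K\supset[0,1]\times\{0\}$, hence for every $\omega\in\SD^k$ the piece $\phi_\omega(K)$ contains the entire bottom edge of its cell, including both bottom corners. Moreover every digit of $\SD$ has a horizontal neighbour in $\SD$ (each $(i,0)$ has $(i+1,0)$ or $(i-1,0)$, and $(3,4),(4,4)$ have each other), so every level-$k$ cell shares a vertical face with another level-$k$ cell inside the same parent cell, and the common bottom corner lies in both pieces. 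Thus no cell at any level, interior or not, has $\phi_\gamma(K)$ disjoint from the other cells, or even from the other pieces $\phi_{\gamma'}(K)$. Yet this sponge satisfies every hypothesis of the theorem: it is a non-degenerate fractal square, and any point whose code contains digits from $\{(3,4),(4,4)\}$ infinitely often is trivial, e.g.\ $(3/8,1/2)$ with code $(3,4)^\infty$. The paper's multi-cell island is exactly what survives in this example: the domino $U=\phi_{(3,4)}([0,1]^2)\cup\phi_{(4,4)}([0,1]^2)$ is a $1$-island, one deletes $J=\{(3,4),(4,4)\}$, and your trapping/tail argument then runs verbatim with $U$ in place of the single cell; here the sub-attractor is $[0,1]\times\{0\}$ and the connected part of $K$ is indeed covered by the images $\phi_\omega([0,1]\times\{0\})$. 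So the fix is to replace ``isolated interior cell'' by ``interior connected component of $\Lambda_k$'' throughout your mechanism, which is precisely the paper's route.
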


The example below shows that the `slicing property' in
 Theorem \ref{thm:inner} cannot be removed.

\begin{example}\label{exam:counter}
\emph{In this example, we identify $\R^2$ with the complex plane $\mathbb C$.
Let $E$ be the self-similar set generated by the IFS  $\{f_i\}_{i=1}^7$, where
$$
f_1(z)= \frac{2z}{3},
f_2(z)= \frac{z}{3} + \frac{{2\mi}}{3},
f_3(z)= \frac{z}{3} + \frac{{1 + 2\mi}}{3},
f_4(z)= \frac{z}{6} + \frac{{4 + 5\mi}}{6},
$$
$$
f_5(z)= \frac{z}{6} + \frac{{5+ 5\mi}}{6},
f_6(z)= \frac{z}{3} + \frac{{2 + \mi}}{3},
f_7(z)= \frac{z}{{12}} + \frac{{22 + 17\mi}}{{24}}.
$$
We will show that the trivial points of $E$ are all located on the line segment $\{1\}\times [1/2,1]$
(Theorem \ref{thm:tri}).
}
\begin{figure}
\subfigure[The digit set of $E$.]{\includegraphics[width=4.5 cm]{trival_point_E}}\quad\quad
\subfigure[The slicing sponge $E$.]{\includegraphics[width=4.5 cm]{trival_point_E_1}}
\caption{}
\label{T}
\end{figure}

\end{example}

Let $X$ be a metric space. Similar to the Hausdorff connectedness index, we define the \emph{box connectedness index}
of $X$ to be
\begin{equation}\label{eq:Bindex}
\text{ind}_B X=\dim_B X_c,
\end{equation}
where $X_c$ is the  connected  part of $X$.

Let $\Phi_i$, $i=1,\dots, d$ be the base IFSes in definition of slicing IFS. We denote by $\Phi(\SD)$ the slicing IFS determined by $\SD$.
There is an important open question in dynamical system that if $\SD'$ is a proper subset of $\SD$, is it true
that
\begin{equation}\label{eq:dropp}
\dim_H \Lambda_{\Phi(\SD')} < \dim_H \Lambda_{\Phi(\SD)}\text{ and }
\dim_B \Lambda_{\Phi(\SD')} < \dim_B \Lambda_{\Phi(\SD)}?
\end{equation}
(See for example, Kanemaki \cite{Kanemaki}.)

Till now we can only show that the above inequalities hold for Bara\'nski carpets.
In the box dimension case, Bara\'nski \cite{Baranski07} gave an implicit formula for box dimension,
which confirms the second inequality of \eqref{eq:dropp} easily.
As for the Hausdorff dimension,   Bara\'nski showed that a certain variation  principle
holds where only the Bernoulli measures are involved. Hence, it is not hard to show that the
maximum cannot occur at a boundary point of the (finite dimensional) parameter space (It is proved in a manuscript of Dejun Feng \cite{Feng}). Hence, we have

\begin{cor}\label{thm:dimension decrease} If a Bara\'nski carpet $\Lambda_\Phi$ has a trivial point, then $$\text{ind}_H(\Lambda_\Phi)<\dim_H(\Lambda_\Phi) \text{ and }
\text{ind}_B(\Lambda_\Phi)<\dim_B(\Lambda_\Phi).$$
\end{cor}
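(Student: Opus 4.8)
The plan is to deduce the corollary by feeding the structural Theorem \ref{thm:subIFS} into the strict monotonicity of dimension under deletion of digits, namely the inequalities \eqref{eq:dropp}. Concretely, since the Bara\'nski carpet $\Lambda_\Phi$ has a trivial point, Theorem \ref{thm:subIFS} gives $(\Lambda_\Phi)_c\subseteq\bigcup_{\omega\in\cup_{k}\SD^k}\phi_\omega(\Lambda')$, where $\Lambda'=\Lambda_{\Phi^m(\SD')}$ is the invariant set of a proper sub-IFS of the $m$-th iterate $\Phi^m$ for some $m\ge 1$ and some $\SD'\subsetneq\SD^m$. The first point I would record is that $\Phi^m$ is itself a slicing (Bara\'nski) IFS on the refined grid, with full digit set $\SD^m$ giving back $\Lambda_\Phi$; hence \eqref{eq:dropp} is applicable to the pair $(\SD',\SD^m)$. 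So the whole corollary reduces to two assertions: that each dimension of $(\Lambda_\Phi)_c$ is bounded by the corresponding dimension of $\Lambda'$, and that $\dim_H\Lambda'<\dim_H\Lambda_\Phi$ and $\dim_B\Lambda'<\dim_B\Lambda_\Phi$.

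The Hausdorff half of the first assertion is immediate: every $\phi_\omega$ is a diagonal affine bijection, hence bi-Lipschitz on $[0,1]^2$, so $\dim_H\phi_\omega(\Lambda')=\dim_H\Lambda'$, and countable stability of Hausdorff dimension yields $\text{ind}_H(\Lambda_\Phi)=\dim_H(\Lambda_\Phi)_c\le\sup_\omega\dim_H\phi_\omega(\Lambda')=\dim_H\Lambda'$. Combined with the strict drop this gives $\text{ind}_H(\Lambda_\Phi)<\dim_H(\Lambda_\Phi)$.

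The box half cannot be argued this way, and I would be explicit about why: box dimension is not countably stable, and in fact the containing union is \emph{box-dense} in $\Lambda_\Phi$, since every cylinder $\phi_\omega([0,1]^2)$ contains the nonempty piece $\phi_\omega(\Lambda')$, so its closure is all of $\Lambda_\Phi$ and carries box dimension $\dim_B\Lambda_\Phi$. Thus the bound $\dim_B(\Lambda_\Phi)_c\le\dim_B\Lambda'$ must be proved directly for the connected part rather than for the union: one uses that a non-trivial connected component is confined to the cylinders that actually meet $\Lambda'$, and that these components organize into a family respecting the cylinder scales, so that a level-$n$ covering adapted to the sub-IFS $\Phi^m(\SD')$ does the job. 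The strict drop $\dim_B\Lambda'<\dim_B\Lambda_\Phi$ is then the easy part, since Bara\'nski's implicit box-dimension formula is strictly increasing in the digit set, so deleting even one element of $\SD^m$ strictly lowers the box dimension.

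I expect the main obstacle to be the strict monotonicity \eqref{eq:dropp} for the \emph{Hausdorff} dimension, where no closed formula is available. The route here is Bara\'nski's variational principle, which expresses $\dim_H\Lambda_{\Phi^m(\SD^m)}$ as the supremum of a Ledrappier--Young type dimension functional over \emph{Bernoulli} measures parametrized by a finite-dimensional probability simplex. The hard step is to show this supremum is not attained on the face of the simplex where the weights of the deleted digits $\SD^m\setminus\SD'$ vanish: because the functional is real-analytic and strictly concave in the relevant directions on the interior, its maximum over the full simplex strictly exceeds its maximum over the sub-simplex coming from $\SD'$ (this strict-interior-maximum statement is the content of Feng's manuscript). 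Once this is granted, the box covering and the confinement of the connected part are comparatively routine, and the two strict inequalities of the corollary follow.
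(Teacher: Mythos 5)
Your skeleton is the same as the paper's: reduce via Theorem \ref{thm:subIFS} to the strict monotonicity \eqref{eq:dropp}, and obtain the two strict drops for Bara\'nski carpets from Bara\'nski's implicit box-dimension formula and from the Bernoulli variational principle together with Feng's unpublished argument that the maximum cannot occur at a boundary point of the parameter simplex; the paper's own ``proof'' is precisely this reduction, stated in the two sentences preceding the corollary. Your Hausdorff half is correct and supplies what the paper leaves implicit (bi-Lipschitz invariance of each $\phi_\omega$ plus countable stability of $\dim_H$), and your remark that \eqref{eq:dropp} must be applied to the iterate $\Phi^m$ --- which is again a slicing IFS whose full attractor is still $\Lambda_\Phi$ --- is a detail the paper skips and is worth making explicit.

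The box half of your proposal contains a genuine gap, and the step you defer cannot be carried out. You rightly observe that $\bigcup_\omega \phi_\omega(\Lambda')$ is dense in $\Lambda_\Phi$; but exactly the same argument shows that the connected part itself is dense: for every non-trivial component $C$ and every finite word $\omega$, the set $\phi_\omega(C)$ is a non-trivial connected subset of $\Lambda_\Phi$, hence is contained in a non-trivial component, hence in $(\Lambda_\Phi)_c$; so $(\Lambda_\Phi)_c$ meets every cylinder and, when non-empty, is dense in $\Lambda_\Phi$. Since upper and lower box dimensions are unchanged under taking closure, $\dim_B(\Lambda_\Phi)_c=\dim_B\Lambda_\Phi$ whenever the carpet has any non-trivial component. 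Consequently the inequality $\dim_B(\Lambda_\Phi)_c\le \dim_B\Lambda'$ that you propose to establish ``directly for the connected part'' by a level-$n$ covering is false, not merely hard: no covering argument can prove it. The box statement is salvageable only if $\text{ind}_B$ is interpreted as $\sup\{\dim_B C:\ C \text{ a non-trivial component of } \Lambda_\Phi\}$, which is how the paper in fact computes it in Example \ref{examE} (Table \ref{tab1} takes $\text{ind}_B(E)=\dim_B Y$, using Lemma \ref{lem:Y}(ii) to know that every non-trivial component is a copy of $Y$); under that reading one still needs a per-component confinement statement --- each non-trivial component lies in a single $\phi_\omega(\Lambda')$, or at least in finitely many --- before the strict drop can be invoked, and Theorem \ref{thm:subIFS} alone does not provide this. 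To be fair, the paper never addresses any of this: it passes from Theorem \ref{thm:subIFS} and \eqref{eq:dropp} to the corollary as though $\dim_B$ were countably stable. You identified the right difficulty; the error is in asserting it can be overcome for $(\Lambda_\Phi)_c$ as literally defined.
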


 But when $d\geq 3$, Das and Simmons \cite{Das16} proved
that the Hausdorff dimension can be obtained by a variation  principle which involves a larger class of measures called pseudo Bernoulli measures. So it is not clear whether  the
first inequality of  \eqref{eq:dropp} holds.  Also, till now,  we do not have a formula
of box dimension of slicing self-affine sponge   with $d\geq 3$.

\begin{example}\label{examE}
\emph{Let $E$ and $E'$ be two Bedford-McMullen carpets indicated by Figure \ref{E}.
 They have
 the same Hausdorff dimension and also have the same box dimension. We will  calculate the connectedness indices
 of $E$ and $E'$ in Section 5, see Table 1.
 In fact, the connected part of $E$ is essentially a \emph{sofic self-affine carpet} studied by Kenyon
 and Peres \cite{KP96sofic}.
 It is seen that  $\text{ind}_H(E)\neq \text{ind}_H(E')$ and $\text{ind}_B(E)\neq \text{ind}_B(E')$, so $E$ and $E'$ are not Lipschitz equivalent.}

 \begin{table}[H]
\centering
\caption{Dimensions and indices of $E$ and $E'$}
\label{tab1}
\begin{tabular}{|l|c|c|c|c|}
\hline
    & $dim_H$ & $ dim_B$ &$ \text{ind}_H $ &   $ \text{ind}_B$ \\ \hline
  $E$ & ${\log _5}\left( {12 + \sqrt[3]{5}} \right)\approx1.627$ & $1 + {\log _8}4\approx1.667$ & $\approx1.61$ & $\frac{{\log \lambda  + \left( {\sigma  - 1} \right)\log 5}}{{\log 8}}\approx 1.662$ \\ \hline
  $E'$& ${\log _5}\left( {12 + \sqrt[3]{5}} \right)$&$ 1 + {\log _8}4$ & $\approx1.54$ & $\frac{{\log 19 + \left( {\sigma  - 1} \right)\log 5}}{{\log 8}}\approx1.640$\\ \hline
\end{tabular}
\noindent{In the above table, $\lambda=\frac{{22+\sqrt {312}}}{2}$ and   $\sigma=\log 8/\log 5$.}
\end{table}

\begin{figure}[H]
\subfigure[The digit set of $E$.]{\includegraphics[width=4.5 cm]{kuang_E}}\quad\quad
\subfigure[The digit set of $E'$.]{\includegraphics[width=4.5 cm]{kuang_E_1}}
\caption{}
\label{E}
\end{figure}
\end{example}

This article is organized as follows. Theorem \ref{thm:homeo} is proved in section 2. In section 3, we recall some basic facts about faces of the polytope $[0,1]^d$. Theorem \ref{thm:inner} and Theorem \ref{thm:subIFS} are proved in section 4. Example \ref{examE} and Example \ref{exam:counter}  are discussed in Section 5 and Section 6, respectively.

\section{\textbf{Proof of Theorem \ref{thm:homeo}} }

First, let us give some notations about IFS. Let $A$ be a finite set.
An \emph{iterated function system (IFS)} on $\R^d$  is a family of contractions $\Phi=\{\phi_{\ba}\}_{\ba\in A}$ on ${\R}^{d}$, and the \textit{attractor} of the IFS is the unique nonempty compact set \textit{K} satisfying $K={\bigcup}_{\ba\in A}\phi_{\ba}{(K)}$, and it is called a \textit{self-similar set} if all the contractions are similitudes. (See \cite{Hut81}.)
%For $a_1\dots a_k\in A^k$, we denote $\phi_{a_1\dots a_k}=\phi_{a_1}\circ\cdots \circ \phi_{a_k}$.
Let $\pi_\Phi: A^\infty\to K$ be the coding map defined by
$$
\{\pi_\Phi((a_k)_{k\geq 1})\}=\bigcap_{k\geq 1} \phi_{a_1\dots a_k}(K).
$$
We say  $(a_k)_{k\geq 1}$ is a \emph{coding} of $x\in K$ if $x=\pi_\Phi((a_k)_{k\geq 1})$.

Let $\Phi$ be a slicing IFS with digit set $\SD$. Let $\Lambda:=\Lambda_\Phi$ be a slicing self-affine sponge.
Let $\pi: \SD^\infty\to \Lambda_\Phi$ be the corresponding coding map.
Let $K:=K(M,\SD)$ be the associated Sierpi\'nski sponge of $\Phi$, let $(\lambda_\bd)_{\bd\in \SD}$
be an generating IFS of $K$ defined by  \eqref{eq:sier}, and
 $\pi': \SD^\infty \to K(M,\SD)$ be the corresponding coding map.

 First, we build a simple lemma.
Let $n\geq 2$ and let
\begin{equation}\label{eq:contr}
\Psi:=(\psi_j=r_jx+b_j)_{j=0}^{n-1}
\end{equation}
 be a family of contractions on $\R$  with $0<r_j<1$,
such that $\psi_0([0,1))\cup\cdots \cup \psi_{n-1}([0,1))$ is a partition of $[0,1)$ from left to right.

For $\bh =(h_k)_{k\geq 1}\in {\left\{ {0,1, \ldots ,n - 1} \right\}^\infty }$, we define two functions
\begin{equation}\label{fun:f}
f(\bh)= \sum_{k\geq 1} \frac{{{h_k}}}{{{n^k}}},
\end{equation}
and
\begin{equation}\label{fun:g}
 g(\bh)= \sum_{k\geq 1}     r_ {h_1}    \cdots  r_ {h_{k - 1}}   {b_{{h_k}}}.
\end{equation}
Note that the first term of the right hand side of \eqref{fun:g} is $b_{h_1}$.

\begin{lemma}\label{lem:Holder} There exists $\alpha>0$ such that for any   $\bh,\bh'\in \{0,1,\dots, n-1\}^\infty$,
$$
1/2\cdot |g(\bh)-g(\bh')|^{1/\alpha} \leq |f(\bh)-f(\bh')|\leq 2|g(\bh)-g(\bh')|^\alpha.
$$
\end{lemma}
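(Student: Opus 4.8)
The plan is to read $f$ and $g$ as the coding maps of two systems of nested interval partitions of $[0,1]$ that share \emph{the same} symbolic (left-to-right) combinatorics, and to obtain the two-sided estimate from a comparison of cylinder lengths. Write $r_{\min}=\min_j r_j$ and $r_{\max}=\max_j r_j$; since $\psi_0([0,1))\cup\cdots\cup\psi_{n-1}([0,1))$ partitions $[0,1)$ we have $\sum_j r_j=1$, whence $r_{\min}\le 1/n\le r_{\max}<1$. For a finite word $w=w_1\cdots w_m$ let $[w]$ denote the cylinder of all sequences with prefix $w$, and set $I_w=f([w])$ and $J_w=g([w])=\psi_{w_1}\circ\cdots\circ\psi_{w_m}([0,1])$; these are closed intervals with $|I_w|=n^{-m}$ and $|J_w|=\prod_{i=1}^m r_{w_i}\in[r_{\min}^m,\,r_{\max}^m]$. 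At each fixed level $m$, the families $\{I_w\}$ and $\{J_w\}$ tile $[0,1]$ in the same left-to-right order indexed by $w$.

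The crux, and the step I expect to be the main obstacle, is the non-injectivity of $f$ and $g$: a point may carry two codings (the analogue of $0.0\overline 9=0.1$), so there is no naive pointwise lower bound for $|f(\bh)-f(\bh')|$ in terms of the first index at which $\bh$ and $\bh'$ differ. I would neutralize this by proving the structural fact that the two coding maps \emph{identify exactly the same pairs of sequences}: for every $\bh$ and every word $w$ with $|w|=m$,
\[
f(\bh)\in I_w \iff g(\bh)\in J_w ,
\]
and in particular $f(\bh)=f(\bh')\iff g(\bh)=g(\bh')$. The reason is that membership of a point in a cylinder, including its endpoints, is governed purely by symbolic data — either $\bh\in[w]$, or $\bh$ codes a shared endpoint, which happens precisely when the tail of $\bh$ after position $m$ is constant equal to $0$ or to $n-1$ — and this condition is the same for $f$ and for $g$ because the left-to-right ordering and the adjacency of consecutive cylinders are combinatorially identical in the two systems. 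This replaces the delicate ``first difference'' bookkeeping by a robust neighbour-cylinder argument phrased on closures.

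For the upper bound, set $v=|g(\bh)-g(\bh')|$ and choose the level $m$ with $r_{\min}^{m+1}\le v<r_{\min}^m$. Since $v$ is strictly smaller than every level-$m$ length $|J_w|$, the two values $g(\bh),g(\bh')$ lie in at most two adjacent level-$m$ cylinders $J_w,J_{w'}$; by the structural fact $f(\bh),f(\bh')$ then lie in $I_w\cup I_{w'}$, an interval of length at most $2n^{-m}$. Converting scales through $n^{-m}=(r_{\min}^m)^{\alpha_1}$ with $\alpha_1=\log n/\log(1/r_{\min})\in(0,1]$ yields $|f(\bh)-f(\bh')|\le 2n\,v^{\alpha_1}$. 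The reverse direction is symmetric: choosing the level $m$ with $n^{-(m+1)}\le|f(\bh)-f(\bh')|<n^{-m}$ and running the same neighbour argument on the $I$-side places $g(\bh),g(\bh')$ in two adjacent $J$-cylinders, whence $|g(\bh)-g(\bh')|\le 2r_{\max}^m\le 2n^{\alpha_2}|f(\bh)-f(\bh')|^{\alpha_2}$ with $\alpha_2=\log(1/r_{\max})/\log n\in(0,1]$.

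Finally I would combine the two Hölder estimates. Taking $\alpha$ to be at most $\min\{\alpha_1,\alpha_2\}$ makes the exponents in the two directions compatible, and since $|f(\bh)-f(\bh')|$ and $|g(\bh)-g(\bh')|$ both lie in $[0,1]$ the multiplicative constants can be absorbed, the regime where both quantities are bounded away from $0$ being treated trivially; a short normalization then puts the bound in the stated form $\tfrac12\,|g(\bh)-g(\bh')|^{1/\alpha}\le |f(\bh)-f(\bh')|\le 2\,|g(\bh)-g(\bh')|^{\alpha}$, while the case $v=0$ (equivalently $|f(\bh)-f(\bh')|=0$) is covered by the structural fact. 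I regard the two cylinder-comparison estimates as the substance of the proof and the passage to the precise constants $1/2$ and $2$ as routine bookkeeping.
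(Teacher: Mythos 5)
Your proposal is correct, but it is organized genuinely differently from the paper's proof. The paper works directly with the pair of sequences: it takes the first index $k$ at which $\bh$ and $\bh'$ differ, then defines a corrected critical index $k^*$ that accounts for the endpoint identification (the case $h_k-h'_k=1$ followed by a $0$-tail versus an $(n-1)$-tail), and asserts simultaneous two-sided bounds for $|f(\bh)-f(\bh')|$ and $|g(\bh)-g(\bh')|$ in terms of cylinder scales at the single level $k^*$, from which $\alpha=\min\{-\log r^*/\log n,\,-\log n/\log r_*\}$ is read off. You never prove a lower bound for either difference: instead you establish the structural fact that the two coding maps have identical cylinder memberships (hence identify exactly the same pairs of sequences), and then run two independent upper-bound arguments, each choosing the level $m$ from the metric size of one difference, using the adjacent-cylinder pigeonhole, and transferring through the structural fact; two-sided comparability then follows from the two opposite one-sided estimates. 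Your route buys robustness: the delicate endpoint-identification issue is isolated in one clean combinatorial lemma, the lower-bound bookkeeping is avoided entirely (which is precisely where the paper's displayed inequalities \eqref{eq:Holder-1}--\eqref{eq:Holder-2} contain slips --- as written, $(r_*)^{-k^*}\le |f(\bh)-f(\bh')|$ cannot hold since the left side exceeds $1$, and the roles of $f$ and $g$ appear swapped), and your structural fact yields Lemma \ref{lem:coding} immediately, whereas the paper derives it from the H\"older estimate. The paper's route buys brevity: a single common critical scale $k^*$ controls both systems at once and the constants $1/2$, $2$ come out directly, while you need the final normalization to absorb the factors $2n$ and $2n^{\alpha_2}$ and to treat the regime where the differences are of order $1$. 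Your structural fact is only sketched, but the reason you give (the fixed points of $\psi_0$ and $\psi_{n-1}$ are $0$ and $1$, and the level-$m$ cylinders of the two systems tile $[0,1]$ in the same lexicographic order with the same shared endpoints) is the correct one, so completing it is routine.
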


\begin{proof}  Denote $r^*=\max\{r_i;~ 0\leq i\leq n-1\}$ and  $r_*=\min\{r_i;~ 0\leq i\leq n-1\}$.

Write $\bh=(h_k)_{k\geq 1}$ and $\bh'=(h'_k)_{k\geq 1}$.
Let $k$ be the smallest integer such that $h_k\neq h'_k$. Without loss of generality, we assume
that $h_k>h'_k$.

If $h_k-h'_k\geq 2$, we set $k^*=k$, otherwise, that is, if $ h_k -  h'_k  = 1$, we set
$$
k^*=\max\left \{\ell\geq k+1;~ \left( {\begin{array}{*{20}{c}}{{h_{k+1}\dots h_{\ell-1}}}  \\
   {{h'_{k+1}\dots h'_{\ell-1}}}  \\
\end{array}} \right) = \left( {\begin{array}{*{20}{c}}
   {0}  \\
   n-1  \\
\end{array}} \right)^{\ell-k-1}\right \}.
$$
Then

\begin{equation}\label{eq:Holder-1}
n^{-k^*}\leq |g(\bh)-g(\bh')|\leq 2n^{-k^*+1}.
\end{equation}

Similarly, we have
\begin{equation}\label{eq:Holder-2}
(r_*)^{-k^*}\leq |f(\bh)-f(\bh')|\leq 2(r^*)^{-k^*+1}.
\end{equation}
Setting $\alpha=\min\{ -\log r^*/\log n, -\log n/\log r_*\}$,
we obtain the lemma.
\end{proof}

Write $\pi=(\pi_1,\dots, \pi_d)$ and $\pi'=(\pi'_1,\dots, \pi'_d)$.

\begin{lemma}\label{lem:coding}  Let  $\overrightarrow \ba=(\ba_k)_{k\geq 1}, \overrightarrow {\ba'}=(\ba'_k)_{k\geq 1}\in \SD^\infty$. Then
$\pi(\overrightarrow \ba)=\pi(\overrightarrow {\ba'})$ if and only if
$\pi'(\overrightarrow \ba)=\pi'(\overrightarrow {\ba'})$.
\end{lemma}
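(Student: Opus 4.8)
The plan is to reduce the $d$-dimensional statement to a one-dimensional one, coordinate by coordinate, and then read off the conclusion directly from Lemma \ref{lem:Holder}. Since both IFSes are diagonal, the coding maps split across coordinates: writing $\overrightarrow \ba=(\ba_k)_{k\ge1}$ with $\ba_k=(a_{k,1},\dots,a_{k,d})$, the $i$-th component $\pi_i(\overrightarrow \ba)$ (resp. $\pi'_i(\overrightarrow \ba)$) depends only on the $i$-th digit sequence $\bh^{(i)}:=(a_{k,i})_{k\ge1}$. Consequently $\pi(\overrightarrow \ba)=\pi(\overrightarrow{\ba'})$ holds exactly when $\pi_i(\overrightarrow \ba)=\pi_i(\overrightarrow{\ba'})$ for every $i$, and likewise for $\pi'$, so it suffices to treat each coordinate separately.

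First I would make the coordinate formulas explicit. Applying $\phi_{\ba_1}\circ\cdots\circ\phi_{\ba_k}$ to $[0,1]^d$ and intersecting, the $i$-th component is the common limit of $\phi_{a_{1,i},i}\circ\cdots\circ\phi_{a_{k,i},i}(0)$. Writing the base maps of $\Phi_i$ as $\psi_j=r_jx+b_j$ in the form \eqref{eq:contr}, the telescoping identity $\psi_{h_1}\cdots\psi_{h_k}(0)=b_{h_1}+r_{h_1}b_{h_2}+\cdots+r_{h_1}\cdots r_{h_{k-1}}b_{h_k}$ shows, upon letting $k\to\infty$, that $\pi_i(\overrightarrow \ba)=g(\bh^{(i)})$, where $g$ is the function \eqref{fun:g} associated with $\Phi_i$. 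For the associated Sierpi\'nski sponge the base maps in coordinate $i$ are $x\mapsto(x+a)/n_i$, so $r_j=1/n_i$ and $b_j=j/n_i$; here $g$ collapses to $f$ of \eqref{fun:f}, giving $\pi'_i(\overrightarrow \ba)=f(\bh^{(i)})$.

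The key step is then immediate from Lemma \ref{lem:Holder}, applied with $n=n_i$ and the family $\Phi_i$ (the constant $\alpha$ may depend on $i$, which is harmless). The two-sided bound $\tfrac12|g(\bh)-g(\bh')|^{1/\alpha}\le|f(\bh)-f(\bh')|\le 2|g(\bh)-g(\bh')|^{\alpha}$ forces $f(\bh)=f(\bh')$ precisely when $g(\bh)=g(\bh')$: the right-hand inequality gives that $g(\bh)=g(\bh')$ implies $f(\bh)=f(\bh')$, and the left-hand inequality gives the converse. Hence $\pi_i(\overrightarrow \ba)=\pi_i(\overrightarrow{\ba'})$ if and only if $\pi'_i(\overrightarrow \ba)=\pi'_i(\overrightarrow{\ba'})$, and intersecting these equivalences over $i=1,\dots,d$ yields the lemma.

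Because Lemma \ref{lem:Holder} carries all the analytic weight, there is no serious obstacle here; the only point requiring care is the bookkeeping of the coordinate decomposition. In particular, the coding coincidences $f(\bh)=f(\bh')$ for distinct sequences $\bh\neq\bh'$ (which do occur, since $f$ is not injective because of the base-$n_i$ carry relations coming from the partition ordering) must be mirrored exactly by $g$. This matching is precisely the biconditional extracted above, so no separate argument about boundary identifications is needed.
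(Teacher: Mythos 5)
Your proof is correct and takes essentially the same route as the paper: both decompose the statement coordinate-by-coordinate using the diagonal (variable-separation) structure of the two IFSes, and then invoke the two-sided H\"older bound of Lemma \ref{lem:Holder} in each coordinate to get the biconditional $g(\bh)=g(\bh') \Leftrightarrow f(\bh)=f(\bh')$. The only difference is that you spell out the telescoping identities showing $\pi_i=g$ and $\pi'_i=f$ on the digit sequences, which the paper leaves implicit.
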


\begin{proof} Notice that both $\phi_{\bd}(z)$ and $\lambda_{\bd}(z)$ are variable separation functions.
Clearly $\pi(\overrightarrow \ba)=\pi(\overrightarrow {\ba'})$ if and only if
\begin{equation}\label{eq:separation}
\pi_j(\overrightarrow \ba)=\pi_j(\overrightarrow {\ba'}),  \quad \text{ for  } 1\leq j\leq d.
\end{equation}
By Lemma \ref{lem:Holder}, \eqref{eq:separation} holds if and only if
$$
\pi'_j(\overrightarrow \ba)=\pi'_j(\overrightarrow {\ba'}),  \quad \text{ for } 1\leq j\leq d,
$$
which is equivalent to $\pi'(\overrightarrow \ba)=\pi'(\overrightarrow {\ba'})$.
The lemma is proved.
\end{proof}

 \begin{proof}[Proof of Theorem \ref{thm:homeo}]
 According to Lemma \ref{lem:coding}, we define a coding preserving map
$F: \Lambda_\Phi\to K(M,\SD)$ by
\begin{equation}\label{eq:F}
F(z)=\pi'\circ \pi^{-1}(z).
\end{equation}
 %Let $F: \Lambda_\Phi\to K(M,\SD)$ be defined by \eqref{eq:F}.
 Write $F(z)=(F_1(z),\dots, F_d(z))$.
 Then $F_j(z)=\pi'_j\circ\pi_j^{-1}(z)$ for $1\leq j\leq d$. By Lemma \ref{lem:Holder},
 all $F_j$ are bi-Holder continuous, so $F$ is also bi-Holder continuous.

 Suppose $z\in \Lambda_{\Phi}\cap \partial [0,1]^d$. Denote $(\ba_k)_{k\geq 1}=\pi^{-1}(z)$.
 Then there exists $j$ such that $\phi_{\ba_k,j}=\phi_{0,j}$
 for all $k\geq 1$, or $\phi_{\ba_k,j}=\phi_{n_k-1,j}$for all $k\geq 1$.
 It follows that $\ba_{k,j}=0$ for all $k\geq 1$, or $\ba_{k,j}=n_j-1$ for all $k\geq 1$. Therefore,
 $\pi'_j((\ba_{k})_{k\geq 1})=0$ or $1$, which implies that $F(z)\in \partial [0,1]^d$. The theorem is proved.
\end{proof}

\section{\textbf{Preliminaries on faces of $[0,1]^d$}}

We recall some notions about convex polytopes, see \cite{ZG95} or \cite{Rock}.
Let $C\subset\mathbb{R}^d$ be a convex polytope, let $F$ be a convex subset of $C$.
% The \emph{affine hull} of $F$, denoted by $\aff(F)$, is the smallest affine subspace containing $F$.
  We say $F$ is a \emph{face} of $C$, if any closed line segment $I\subset C$ with a relative interior point  in $F$ has both endpoints in $F$ (see \cite{Rock}).
The \emph{dimension} of a face $F$, denoted by $\dim F$, is the dimension of the smallest affine subspace containing $F$; moreover, $F$ is called an \emph{$r$-face} of $C$  if $\dim F=r$. We note that $C$ is a $d$-face of itself if $\dim C=d$. For $z\in C$, a face $F$ of $C$ is called the \emph{containing face} of $z$ if $z$ is a relative interior point of $F$.
%Clearly, the containing face of a point in $C$ is unique.

In this section, we list some simple facts about faces of $[0,1]^d$ we need later. We call a pair $(A, B)$ an \emph{ordered partition} of $\{1,\dots,d\}$ if $A\cap B=\emptyset$ and $A\cup B=\{1,\dots,d\}$. Let $\mathbf{e}_1,\dots,\mathbf{e}_d$ be the canonical basis of $\mathbb{R}^d$. The following lemma is  obvious, see Chapter 2 of \cite{ZG95}.

\begin{lemma}\label{face}
\indent\emph{(i)} Let $(A,B)$ be an ordered partition of $\{1,\dots,d\}$  with $\#A=r$. Then the set
\begin{equation}\label{Fface}
F=\left\{\sum\limits_{j\in A}c_j\mathbf{e}_{j};\ c_j\in[0,1]\right\}+b
\end{equation}
is an $r$-face of $[0,1]^d$ if and only if $b\in T$, where
\begin{equation}\label{Ttace}
T:=\left\{\sum\limits_{j\in B}\varepsilon_j\mathbf{e}_j;\ \varepsilon_j\in\{0,1\}\right\}.
\end{equation}
\indent\emph{(ii)} For any $r$-face $F$ of $[0,1]^d$, there exists an ordered partition $(A,B)$ of $\{1,\dots,d\}$ with $\#A=r$ such that $F$ can be written as \eqref{Fface}.
\end{lemma}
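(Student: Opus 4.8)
The plan is to handle the two implications of part (i) directly from the line-segment definition of a face, and to obtain part (ii) from the standard exposedness of faces of a polytope.

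For the forward implication of (i), suppose $b\in T$, so that $b_j=0$ for $j\in A$ and $b_j=\varepsilon_j\in\{0,1\}$ for $j\in B$; then $F=\{x\in[0,1]^d:\ x_j=\varepsilon_j \text{ for } j\in B\}$, which is convex, contained in $[0,1]^d$, and whose affine hull $\{x:\ x_j=\varepsilon_j,\ j\in B\}$ has dimension $d-\#B=\#A=r$. To check the face property I would take any closed segment $I=[p,q]\subset[0,1]^d$ having a relative interior point $x\in F$, write $x=(1-t)p+tq$ with $t\in(0,1)$, and note that for each $j\in B$ the extreme value $\varepsilon_j=(1-t)p_j+tq_j$ together with $p_j,q_j\in[0,1]$ and $t\in(0,1)$ forces $p_j=q_j=\varepsilon_j$. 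Hence $p,q\in F$, so $F$ is an $r$-face. For the reverse implication, since a face is by definition a subset of $[0,1]^d$, I would inspect coordinates: for $j\in A$ the $j$-th coordinate of points of $F$ sweeps $[b_j,b_j+1]$, and $[b_j,b_j+1]\subset[0,1]$ forces $b_j=0$; for $j\in B$ the $j$-th coordinate is the constant $b_j\in[0,1]$, and if $b_j\in(0,1)$ I would pick a relative interior point $x$ of $F$ and the short segment $[x-\delta\mathbf{e}_j,\,x+\delta\mathbf{e}_j]\subset[0,1]^d$, whose midpoint is $x\in F$; the face property would then place the endpoints in $F$, contradicting the constancy of the $j$-th coordinate. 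Thus $b_j\in\{0,1\}$ for all $j\in B$, i.e. $b\in T$.

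For part (ii), I would invoke the structure theory of convex polytopes (Chapter~2 of \cite{ZG95}): every face of $[0,1]^d$ is exposed, so there is a linear functional $x\mapsto\langle\mathbf{c},x\rangle$ attaining its maximum over $[0,1]^d$ exactly on $F$. Since $\sum_i c_ix_i$ is maximized coordinatewise over the cube, the maximizing set forces $x_i=1$ when $c_i>0$, forces $x_i=0$ when $c_i<0$, and leaves $x_i$ free in $[0,1]$ when $c_i=0$. Setting $B=\{i:\ c_i\neq 0\}$ with $\varepsilon_i=1$ if $c_i>0$ and $\varepsilon_i=0$ if $c_i<0$, and $A=\{1,\dots,d\}\setminus B$, this maximizing set is precisely the set \eqref{Fface} with $b=\sum_{j\in B}\varepsilon_j\mathbf{e}_j\in T$, and its dimension equals $\#A$; matching with $\dim F=r$ gives $\#A=r$.

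The coordinate bookkeeping in (i) is routine; the one genuine input is the exposedness used in (ii). If one prefers a self-contained derivation avoiding that citation, the main obstacle is upgrading the abstract face $F$ to the explicit product form: I would first show via the segment argument that each coordinate restricted to $F$ is either pinned to a value in $\{0,1\}$ or ranges over all of $[0,1]$, then use that an extreme point of $F$ is an extreme point of $[0,1]^d$ (hence lies in $\{0,1\}^d$) so that $F=\operatorname{conv}(F\cap\{0,1\}^d)$, and finally verify that the vertices of $[0,1]^d$ lying in $F$ fill out a full subcube. This last step, showing the free coordinates really decouple into a product, is the delicate point and is exactly what exposedness supplies cleanly.
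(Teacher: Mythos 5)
Your proof is correct, but note that the paper itself offers no proof at all for this lemma: it simply declares it ``obvious'' and points to Chapter 2 of \cite{ZG95}, so there is nothing to match your argument against line by line. What you supply is a genuine filling-in of the details, and it is sound. Part (i) is exactly the right kind of elementary argument given the paper's Rockafellar-style definition of a face (\cite{Rock}): the coordinatewise pinning $\varepsilon_j=(1-t)p_j+tq_j\Rightarrow p_j=q_j=\varepsilon_j$ handles the ``if'' direction, and your segment $[x-\delta\mathbf{e}_j,\,x+\delta\mathbf{e}_j]$ trick correctly rules out $b_j\in(0,1)$ in the ``only if'' direction (together with the observation $F\subset[0,1]^d$ forcing $b_j=0$ for $j\in A$). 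Part (ii) leans on exposedness of faces of a polytope, which is precisely the standard polytope theory the paper is citing anyway, so your proof is no less self-contained than the paper's; the coordinatewise maximization argument identifying the argmax set of $\langle\mathbf{c},x\rangle$ with a set of the form \eqref{Fface}, $b\in T$, is correct, with the harmless convention that $F=[0,1]^d$ corresponds to $\mathbf{c}=\mathbf{0}$. You also correctly diagnose, in your closing remarks, that the only genuinely nontrivial content of the lemma is the product (decoupling) structure of faces of the cube, which is exactly what exposedness buys; one could alternatively run your extreme-point argument ($F=\operatorname{conv}(F\cap\{0,1\}^d)$ plus the segment argument showing each coordinate is pinned or free), but citing the polytope fact, as both you and the authors do, is the cleaner route.
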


We will call $F_0=\{\sum\limits_{j\in A}c_j\mathbf{e}_{j};\ c_j\in[0,1]\}$ a \emph{basic -face} related to the ordered partition $(A, B)$. If $A=\emptyset$,  we set  $F_0=\{\mathbf{0}\}$ by convention.
%Moreover, we give a partition $B=B_0\cup B_1$ according to $b$ by setting
%\begin{equation}\label{B0B1}
%\begin{array}{ll}
%B_0&=\{j\in B;\ \text{the $j$-th coordinate of $b$ is 0}\},\\
%B_1&=\{j\in B;\ \text{the $j$-th coordinate of $b$ is 1}\}.
%\end{array}
%\end{equation}
Let $x=\sum\limits_{j\in A}\alpha_j\mathbf{e}_{j}+\sum\limits_{i\in B}\beta_i\mathbf{e}_i\in[0,1]^d$, we define two projection maps as follows:
\begin{equation}\label{piAB}
\pi_A(x)=\sum\limits_{j\in A}\alpha_j\mathbf{e}_{j},\quad
\pi_B(x)=\sum\limits_{i\in B}\beta_i\mathbf{e}_{i}.
\end{equation}
If $F$ is an $r$-face of $[0,1]^d$, we denote by $\mathring{F}$ the relative interior of $F$.

\begin{lemma}[Huang and Rao \cite{HR20}]\label{face-to-face}
Let $F=F_0+b$ be an $r$-face of $[0,1]^d$ given by (3.1). Let $u\in\mathbb{Z}^d$. Then $\mathring{F}\cap(u+[0,1]^d)\ne\emptyset$ if and only if $u\in b-T$  where $T$ is defined in \eqref{Ttace}.
\end{lemma}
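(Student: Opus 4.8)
The plan is to reduce the statement to a coordinate-by-coordinate comparison, exploiting that both $F$ and the translated cube $u+[0,1]^d$ are products of intervals along the canonical axes. First I would record the explicit description of the relative interior. By Lemma \ref{face}, writing $b=\sum_{i\in B}\varepsilon_i\mathbf{e}_i$ with $\varepsilon_i\in\{0,1\}$, the face is $F=\{\sum_{j\in A}c_j\mathbf{e}_j+b:c_j\in[0,1]\}$, and since $F_0$ spans exactly the coordinate directions indexed by $A$, its relative interior is obtained by opening those intervals:
\[
\mathring{F}=\left\{\sum_{j\in A}c_j\mathbf{e}_j+b:c_j\in(0,1)\right\}.
\]
Thus a point $x\in\mathring{F}$ has $x_j\in(0,1)$ for $j\in A$ and $x_i=\varepsilon_i\in\{0,1\}$ for $i\in B$.

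Next I would translate the target condition $u\in b-T$ into coordinates. Since $T=\{\sum_{i\in B}\delta_i\mathbf{e}_i:\delta_i\in\{0,1\}\}$ involves only the $B$-directions, $u\in b-T$ is equivalent to $u_j=0$ for every $j\in A$ together with $u_i=\varepsilon_i-\delta_i$ for some $\delta_i\in\{0,1\}$; that is, $u_i\in\{\varepsilon_i-1,\varepsilon_i\}$ for each $i\in B$. The whole lemma then amounts to checking that this is exactly the condition under which some $x\in\mathring{F}$ satisfies $u_k\le x_k\le u_k+1$ for all $k$.

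For the forward implication I would pick any $x\in\mathring{F}\cap(u+[0,1]^d)$ and examine the constraint $u_k\le x_k\le u_k+1$ separately on $A$ and $B$. For $j\in A$, $x_j\in(0,1)$ forces the unique integer $u_j=0$ (the lower bound gives $u_j\le 0$ and the upper bound $u_j\ge 0$); for $i\in B$, $x_i=\varepsilon_i$ forces $\varepsilon_i-1\le u_i\le\varepsilon_i$, i.e.\ $u_i\in\{\varepsilon_i-1,\varepsilon_i\}$. These are precisely the coordinate conditions defining $b-T$. Conversely, given $u\in b-T$, I would exhibit a witness by taking $c_j=1/2$ for all $j\in A$ and $x_i=\varepsilon_i$ for $i\in B$; the coordinate bounds $0\le 1/2\le 1$ and $u_i\le\varepsilon_i\le u_i+1$ hold by inspecting the two cases $u_i=\varepsilon_i$ and $u_i=\varepsilon_i-1$, so this $x$ lies in the intersection.

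Since everything decouples across coordinates, there is no real obstacle; the only point demanding care is the use of the relative (rather than the ambient) interior. One must exploit that $\mathring{F}$ opens up only the $A$-intervals while keeping the $B$-coordinates fixed at the integer values $\varepsilon_i$: it is the open constraint along $A$ that pins down $u_j=0$, whereas the fixed boundary value along $B$ produces the two-element range $\{\varepsilon_i-1,\varepsilon_i\}$ for $u_i$. Keeping these two mechanisms distinct is what makes the equivalence with $b-T$ come out cleanly.
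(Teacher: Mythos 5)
Your proof is correct. The paper itself does not prove this lemma --- it is quoted from Huang and Rao \cite{HR20} --- but your coordinate-by-coordinate argument is complete and is the natural one: the identification $\mathring{F}=\{\sum_{j\in A}c_j\mathbf{e}_j+b:\ c_j\in(0,1)\}$, the forward direction pinning $u_j=0$ for $j\in A$ and $u_i\in\{\varepsilon_i-1,\varepsilon_i\}$ for $i\in B$, and the explicit witness with $c_j=1/2$ for the converse all check out, including the degenerate case $A=\emptyset$ where $\mathring{F}=\{b\}$. Nothing further is needed.
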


The following lemma strengths Lemma 3.1 in \cite{HR20}, and allows us to give simpler arguments in
Section 4 comparing to \cite{HR20}.

\begin{lemma}\label{dim-increase}
Let $z_0\in \partial [0,1]^d$  and  let $F$ be the containing face of $z_0$.
Let
$$g(x_1,\dots, x_d)=(a_1x_1+t_1,\dots, a_dx_d+t_d)$$
 be a map such that $a_j\in(0,1)$ and $g([0,1]^d)\subset [0,1]^d$.
Let $F'$ be the containing face of $g(z_0)$. Then

(i) $g(F)\subset F'$;

(ii) either $F'= F$ or $\dim F'\ge\dim F+1$.
\end{lemma}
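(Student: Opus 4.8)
The plan is to translate everything into coordinates using the explicit description of faces from Lemma \ref{face}. Write the containing face of $z_0$ as $F=F_0+b$ for an ordered partition $(A,B)$ of $\{1,\dots,d\}$ with $b=\sum_{i\in B}\varepsilon_i\mathbf{e}_i\in T$; since $z_0\in\mathring F$, the set $A$ is exactly the set of coordinates $j$ with $(z_0)_j\in(0,1)$, while for $i\in B$ one has $(z_0)_i=\varepsilon_i\in\{0,1\}$. Likewise write the containing face of $g(z_0)$ as $F'=F_0'+b'$ with partition $(A',B')$, so that $A'$ is precisely the set of coordinates where $g(z_0)$ lies strictly inside $(0,1)$. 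The hypothesis $g([0,1]^d)\subset[0,1]^d$ unwinds coordinatewise to $0\le t_j$ and $a_j+t_j\le 1$ for every $j$, which is the only analytic input I will need.

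The key step is to show $A\subseteq A'$, i.e.\ that a free coordinate of $z_0$ stays free after applying $g$. For $j\in A$ we have $(z_0)_j\in(0,1)$, and since $a_j>0$ the chain $t_j<a_j(z_0)_j+t_j<a_j+t_j$ together with $t_j\ge 0$ and $a_j+t_j\le 1$ forces $g(z_0)_j\in(0,1)$; hence $j\in A'$. This inclusion immediately yields $\dim F=\#A\le\#A'=\dim F'$ and, dually (taking complements in $\{1,\dots,d\}$), $B'\subseteq B$.

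Part (i) then follows by a direct coordinate check. A general point $x\in F$ satisfies $x_j\in[0,1]$ for $j\in A$ and $x_i=(z_0)_i$ for $i\in B$. For $j\in A'$ the coordinate $g(x)_j$ lies in $[0,1]$ because $g([0,1]^d)\subset[0,1]^d$; for $i\in B'\subseteq B$ the $i$-th coordinate is constant on $F$, equal to $(z_0)_i$, so $g(x)_i=a_i(z_0)_i+t_i=g(z_0)_i=b'_i$. Thus $g(x)$ meets the defining conditions of $F'$ from Lemma \ref{face}, giving $g(F)\subset F'$.

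For part (ii), the only remaining case is $\dim F'=\dim F$, i.e.\ $\#A'=\#A$; combined with $A\subseteq A'$ this forces $A=A'$ and $B=B'$. It then remains to prove $b'=b$, which is where the real care is needed: for each $i\in B$ I will use that $g(z_0)_i\in\{0,1\}$ to pin down the map. If $(z_0)_i=0$ then $g(z_0)_i=t_i$, and since $0\le t_i<a_i+t_i\le 1$ the only admissible value in $\{0,1\}$ is $t_i=0$, so $g(z_0)_i=(z_0)_i$; symmetrically, if $(z_0)_i=1$ then $g(z_0)_i=a_i+t_i$ is forced to equal $1$. Hence $b'=\sum_{i\in B}g(z_0)_i\mathbf{e}_i=b$ and $F'=F$. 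I expect this last case analysis---ruling out the possibility that a coordinate equal to $0$ or $1$ is sent to the opposite endpoint while the dimension stays fixed---to be the main (though still elementary) obstacle, since it is exactly the place where the strict positivity $a_i>0$ and the containment $g([0,1]^d)\subset[0,1]^d$ must be combined.
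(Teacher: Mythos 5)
Your proof is correct, and for part (i) it takes a genuinely different route from the paper's. The paper proves (i) abstractly: it takes a closed segment $I\subset F$ having $z_0$ as a relative interior point, notes that $g(I)\subset[0,1]^d$ is a segment with relative interior point $g(z_0)\in F'$, and invokes the defining property of a face to conclude its endpoints, hence $g(F)$, lie in $F'$; the inequality $\dim F'\ge\dim F$ is then a corollary of (i). You instead work purely in coordinates of the cube: your key observation $A\subseteq A'$ (a coordinate of $z_0$ lying in $(0,1)$ stays in $(0,1)$ under $g$, because $a_j>0$, $t_j\ge 0$, $a_j+t_j\le 1$) gives the dimension inequality and $B'\subseteq B$ first, after which (i) is a direct verification. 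For part (ii) the two proofs converge to the same endpoint analysis: for $i\in B$, the value $0$ forces $t_i=0$ and the value $1$ forces $a_i+t_i=1$, whence $b'=b$. What your approach buys is that it is more elementary and makes fully explicit a step the paper asserts briefly, namely that $F'_0=F_0$ (equivalently $A'=A$) when $\dim F'=\dim F$ --- for you this is immediate from $A\subseteq A'$ plus equal cardinality, whereas the paper reads it off from the inclusion $g(F)\subset F'$ of affine images. What the paper's approach buys is generality: its argument for (i) uses only the abstract definition of a face, so it would apply verbatim to any convex polytope, while yours is tied to the explicit face structure of $[0,1]^d$ from Lemma~\ref{face} (which is all that is needed here).
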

\begin{proof} Notice that the assumptions imply that $t_j\in [0,1)$ for all $1\leq j\leq d$.

Let $(A,B)$ be an ordered partition in Lemma \ref{face}  which defines $F$.
By the definition of containing face, we have $z_0\in\mathring{F}$. Suppose that $g(z_0)\notin F$.

First, we prove (i).
If $F=\{z_0\}$,  the assertion holds trivially. Now we suppose that $\dim F\geq 1$.
Take any point $x\in F\setminus\{z_0\}$ and let $I$ be a closed line segment in $F$ such that $x$ is an endpoint of $I$ and $z_0$ is a relative interior point of $I$.
It is clear that $g(I)\subset g([0,1]^d)\subset[0,1]^d$. Since $g(z_0)\in F'$, we have $g(I)\subset F'$. By the arbitrariness of $x$ we obtain that $g(F)\subset F'$. Especially, we have
$\dim F'\ge\dim F.$

Next, we prove (ii).
Denote $r=\dim F$ and write $F$ as $F=F_0+b$, where $F_0$ is a basic $r$-face, and
$b\in T:=\{\sum\limits_{j\in B}\varepsilon_j\mathbf{e}_j;\ \varepsilon_j\in\{0,1\}\}$.

%We claim that $F'$ is not an $r$-face of $[0,1]^d$. This claim together with $\dim F'\ge\dim F$  imply
%that $\dim F'\ge\dim F+1$.

Suppose   that $F'$ is an $r$-face of $[0,1]^d$. Then there exists
 an ordered partition $(A',B')$ of $\{1,\dots,d\}$ such that $F'=F'_0+b'$, where $F'_0=\{\sum\limits_{j\in A'}c_j\mathbf{e}_{j};\ c_j\in[0,1]\}$ and $b'\in T'=\{\sum\limits_{j\in B'}\varepsilon_j\mathbf{e}_j;\ \varepsilon_j\in\{0,1\}\}$. Since
 \begin{equation}\label{eq1}
\diag(a_1,\dots, a_d)(F_0+b)+g(\mathbf{0})=g(F)\subset F'=F'_0+b',
\end{equation}
we have $F'_0=F_0$. Hence $A'=A$ and $B'=B$, and it follows that  $T'=T$. Applying $\pi_B$ to both sides of
\eqref{eq1}, we have
\begin{equation} \label{eq2}
b'=\pi_B(F')=\pi_B(g(F))=\pi_B(\diag(a_1,\dots, a_d)F_0)+\pi_B(g(b))=\pi_B(g(b)).
  \end{equation}
Denote $b=\sum_{j\in B} b_j\be_j$, where all $b_j\in\{0,1\}$, then
\begin{equation}\label{piB0}
b'= \sum_{j\in B} (a_jb_j+ t_j)\be_j\in T.
\end{equation}
 If $b_j=1$, then $a_jb_j+t_j>0$ and it forces $a_jb_j+t_j=1$;
   if $b_j=0$, then $a_jb_j+t_j=t_j<1$ and it forces that $a_jb_j+t_j=0$.
   Hence $b'=b$ and it follows that $F'= F$. This  confirms (ii)  and the lemma is proved.
\end{proof}

\section{\textbf{Proof of Theorem \ref{thm:inner} and  Theorem \ref{thm:subIFS}}}

Let $K=K(M, \SD)$ be a $d$-dimensional Sierpi\'nski sponge defined in Section 1.
We denote the IFS generating $K$ by $\{\Phi_i\}_{i=1}^N$.
Let $\Sigma=\{1,2,\dots,N\}$. Denote by $\Sigma^{\infty}$ and $\Sigma^{k}$ the sets of infinite words and words of length $k$ over $\Sigma$ respectively. Let $\Sigma^*=\bigcup_{k\geq0} \Sigma^{k}$ be the set of all finite words.
For $k\geq 1$, denote $\bD_k=\bD+M\bD+\dots+M^{k-1}\bD$. We call
$$K_k=M^{-k}([0,1]^d+\bD_k)$$
 the \emph{$k$-th approximation} of $K$. Clearly, $K_k\subset K_{k-1}$ and $K=\bigcap_{k=0}^\infty K_k$. For each $\sigma=\sigma_1\dots\sigma_k\in\Sigma^k$, we call $\Phi_\sigma([0,1]^d)$ a \emph{$k$-th cell}.

 For a point $z\in K$, we say $F$ is the containing face of $z$ if $F$ is a face of  $[0,1]^d$ and it is the containing face of $z$.  From now on, we always assume that
  \begin{equation}\label{assu2}
\text{$z_0$ is a trivial point of $K$ and $F$ is the containing face of $z_0$}.
\end{equation}
Let $(A,B)$ be an ordered partition in Lemma \ref{face} which defines $F$. Recall $\pi_A$ and $\pi_B$ are defined in \eqref{piAB}.
 Denote
\begin{equation}\label{layer}
\Sigma_{\sigma}=\{\omega\in\Sigma^k;\pi_A(\Phi_\omega(\mathbf{0}))
=\pi_A(\Phi_{\sigma}(\mathbf{0}))\},
\end{equation}
 \begin{equation}\label{H}
H_{\sigma}=\bigcup_{\omega\in\Sigma_\sigma}\Phi_\omega([0,1]^d).
\end{equation}
Indeed, $H_{\sigma}$ is the union of all $k$-th cells having the same projection with $\Phi_\sigma([0,1]^d)$ under $\pi_A$. The following lemma is a generalization of Lemma 3.2 in \cite{HR20}.

\begin{lemma}\label{another-z1}
Let $k>0$ and let $\sigma\in\Sigma^k$. If $H_\sigma$ is not connected or $H_\sigma\cap F=\emptyset$, then there exists $\omega^*\in\Sigma_\sigma$ such that $\Phi_{\omega^*}(z_0)\notin F$ and it is a trivial point of $K$.
\end{lemma}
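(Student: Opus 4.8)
The plan is to locate, among the level-$k$ cells making up the tube $H_\sigma$, a single extremal cell whose image of $z_0$ simultaneously avoids $F$ and is ``exposed'' enough to inherit triviality from $z_0$. First I would record the geometry of the candidate points $\Phi_\omega(z_0)$, $\omega\in\Sigma_\sigma$. Writing $F=F_0+b$ with $b=\sum_{j\in B}b_j\be_j$ and $b_j\in\{0,1\}$, the assumption $z_0\in\mathring F$ means that $z_0$ has all its $A$-coordinates in $(0,1)$ and its $B$-coordinates equal to $b$. Since each $\Phi_\omega$ is coordinatewise affine and all cells of $H_\sigma$ share the same $A$-footprint, every $\Phi_\omega(z_0)$ has the same interior $A$-coordinate and sits exactly at the $b$-corner of its own cell in the $B$-directions. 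To detect extremality I would use the linear functional $\ell(x)=\sum_{j\in B}(2b_j-1)x_j$, which is constant in the $A$-directions and attains its maximum over each cell precisely at that cell's $b$-corner; in particular $\ell(\Phi_\omega(z_0))=\max_{x\in\Phi_\omega([0,1]^d)}\ell(x)$.

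The central step is a triviality criterion: if $p=\Phi_{\omega}(z_0)$ lies in no cell $\Phi_{\omega'}([0,1]^d)$ with $\omega'\ne\omega$, then $p$ is a trivial point of $K$. Here the interior $A$-coordinate of $p$ already rules out every cell outside the tube (distinct $A$-footprints are level-$k$ sub-boxes with disjoint interiors), so only tube cells can compete. Because $\Phi_\omega\colon K\to\Phi_\omega(K)$ is a homeomorphism carrying the trivial point $z_0$ to $p$, the singleton $\{p\}$ is a connected component of $\Phi_\omega(K)$; and if $p$ belongs to no other cell then $p$ has a neighborhood $U$ with $U\cap K\subset\Phi_\omega(K)$. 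A clopen-set argument in the compact set $\Phi_\omega(K)$ (where components coincide with quasi-components) then upgrades ``$\{p\}$ is a component of $\Phi_\omega(K)$'' to ``$\{p\}$ is a component of $K$''. I expect this local-to-global passage for connected components to be the part requiring the most care, since components do not in general localize.

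With the criterion established the choice of $\omega^*$ is combinatorial. I would first observe that \emph{at most one} cell of $H_\sigma$ meets $F$: a tube cell meets $F$ iff it sits at the $b$-extreme in \emph{every} $B$-direction, and with the $A$-footprint fixed this determines a unique cell. Hence the hypotheses always produce a connected component $C$ of $H_\sigma$ with $C\cap F=\emptyset$ --- any component when $H_\sigma\cap F=\emptyset$, and a component distinct from the one containing the unique $F$-meeting cell when $H_\sigma$ is disconnected. I then let $\omega^*$ maximize $\ell(\Phi_\omega(z_0))$ over the cells comprising $C$. Since $\Phi_{\omega^*}(z_0)$ lies in $C$ and $C\cap F=\emptyset$, we get $\Phi_{\omega^*}(z_0)\notin F$. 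Finally, any cell $\omega'$ containing $p=\Phi_{\omega^*}(z_0)$ must lie in the tube (by the $A$-coordinate) and must touch $\omega^*$, hence lie in $C$; then $\ell(p)\le\max_{\Phi_{\omega'}([0,1]^d)}\ell=\ell(\Phi_{\omega'}(z_0))\le\ell(\Phi_{\omega^*}(z_0))=\ell(p)$ forces $p$ to be the $b$-corner of $\omega'$ as well, so $\omega'=\omega^*$. Thus $p$ lies in no other cell, and the triviality criterion makes $\Phi_{\omega^*}(z_0)$ a trivial point of $K$ off $F$, completing the proof.
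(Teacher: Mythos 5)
Your proposal is correct and follows essentially the same route as the paper's proof: show that at most one cell of $H_\sigma$ can meet $F$, extract a connected component of $H_\sigma$ disjoint from $F$, and select an extremal cell inside it whose copy of $z_0$ is forced to lie in no other cell and hence be trivial. The only differences are presentational: your single functional $\ell(x)=\sum_{j\in B}(2b_j-1)x_j$ packages in one step what the paper does by a two-stage extremization (minimizing the $B_0$-coordinates, then maximizing the $B_1$-coordinates), and you write out in full the triviality verification (the sandwich argument plus the quasi-component localization) that the paper delegates to Lemma 3.2 of \cite{HR20}.
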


\begin{proof}
Let $\dim F=r$ and let $(A,B)$ be the ordered partition  which defines $F$.
Then $F=F_0+b$, where $F_0$ is a basic $r$-face and $b=\sum_{j\in B} b_j\be_j$.

We claim that if $H_\sigma\cap F\ne\emptyset$, then there is only one $k$-th cell $\Phi_\omega([0,1]^d)$ in $H_\sigma$ which intersects $F$. Let $\Phi_\omega([0,1]^d)$ be such a cell.
 % Suppose  $\omega\in \Sigma_\sigma$ and  $\Phi_\omega([0,1]^d)\cap F\ne\emptyset$.
 Write
 $$\Phi_\omega(z)=\diag(1/n_1^k,\dots, 1/n_d^k)z+(t_1,\dots, t_d).$$
 By Lemma \ref{dim-increase}, we have $\Phi_\omega(F)\subset F$; especially,  we have $\Phi_\omega(b)=b$, which implies
 $$\frac{b_j}{n_j^k}+t_j=b_j, \quad \text{ for } j\in B.$$
 It follows that $t_j=1-{1}/{n_j^k}$ if $b_j=1$ and $t_j=0$ if $b_j=0$.
 As for $j\in A$, the maps $\Phi_{\tilde \omega}$, ${\tilde \omega}\in H_\sigma$,  share the same $j$-th component with $\Phi_\sigma$, so $t_j$ are determined.
 Hence $\omega$ is unique in $\Sigma_\sigma$, and the claim is proved.

The  assumptions of the lemma and the above claim imply that  there is a connected component $U$ of
$H_\sigma$ such that $U\cap F=\emptyset$. Let
$W=\{\omega\in\Sigma_\sigma;\Phi_\omega([0,1]^d)\subset U\}.$

First, set
$B_0=\{j\in B;\ \text{the $j$-th coordinate of $b$ is 0}\}$,
$B_1=B\setminus B_0,$
and for each $\omega\in W$, write
$$
\pi_B(\Phi_\omega(\mathbf{0}))=\sum\limits_{j\in B_0}\alpha_j(\omega)\mathbf{e}_j+\sum\limits_{j\in B_1}\beta_j(\omega)\mathbf{e}_j.
$$
Secondly,  set
$W'=\left\{\omega\in W;\ \sum\limits_{j\in B_0}\alpha_j(\omega)\text{ attains the minimum}\right\}$, and
  take $\omega^*\in W'$  such that
$\sum\limits_{j\in B_1}\beta_j(\omega^*)$ attains the maximum in $W'$.
Then by exactly the same argument as Lemma 3.2 of \cite{HR20}, one can show that
$\Phi_{\omega^*}(z_0)$ is a trivial point of $K$. The lemma is proved.
\end{proof}

 Recall that a $d$-dimensional Sierpi\'nski sponge is degenerated if it is contained in a $(d-1)$-face   of $[0,1]^d$.

\begin{thm}\label{thm:trivial-points}
Let $K$ be a non-degenerated Sierpi\'nski self-affine sponge in $\R^d$. Let $z_0$ be a trivial point of $K$. Then there exists another trivial point $z^*\in (0,1)^d$.
\end{thm}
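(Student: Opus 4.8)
\emph{Overall strategy.} The plan is to show that from any trivial point $z_0$ whose containing face $F$ satisfies $\dim F<d$, one can always produce a new trivial point whose containing face has strictly larger dimension. Since the dimension of a face of $[0,1]^d$ lies in $\{0,1,\dots,d\}$, repeating this step must stop, and it can only stop at a trivial point whose containing face is $[0,1]^d$ itself, i.e.\ a point of $(0,1)^d$. Concretely, I would choose among all trivial points of $K$ one, still denoted $z_0$, whose containing face $F$ has maximal dimension $r$, and argue by contradiction that $r<d$ is impossible. When $z_0$ already lies in $(0,1)^d$ we are done (and a second interior trivial point is then obtained by applying any $\Phi_{\omega}$ that maps $z_0$ into the interior), so assume $z_0\in\partial[0,1]^d$ and $B\neq\emptyset$.

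\emph{The increment engine.} Suppose for the moment that we have found a level $k$ and a word $\sigma\in\Sigma^k$ for which $H_\sigma$ is disconnected or $H_\sigma\cap F=\emptyset$. Then Lemma \ref{another-z1} produces $\omega^*\in\Sigma_\sigma$ such that $\Phi_{\omega^*}(z_0)$ is again a trivial point and $\Phi_{\omega^*}(z_0)\notin F$. Let $F'$ be the containing face of $\Phi_{\omega^*}(z_0)$. Applying Lemma \ref{dim-increase} to the diagonal contraction $g=\Phi_{\omega^*}$ and the boundary point $z_0$ gives $\Phi_{\omega^*}(F)\subset F'$ together with the dichotomy $F'=F$ or $\dim F'\ge\dim F+1$. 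Since $\Phi_{\omega^*}(z_0)\in F'$ but $\Phi_{\omega^*}(z_0)\notin F$, the alternative $F'=F$ is excluded, whence $\dim F'\ge r+1$. This contradicts the maximality of $r$, so the whole theorem reduces to supplying such a $\sigma$.

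\emph{Producing a good column.} To find $\sigma$ I would argue by a dichotomy on the columns above $z_0$. Fix a coding of $z_0$ and let $H^{(k)}$ denote $H_\sigma$ with $\sigma$ its length-$k$ prefix; these are nested compact sets, and since $H^{(k)}\subset K_k$ one checks that $\bigcap_{k}H^{(k)}=K\cap\pi_A^{-1}(\pi_A(z_0))$, the fibre $L$ of $K$ through $z_0$ in the $B$-directions. If some $H^{(k)}$ is disconnected, the previous paragraph applies and we are done. Otherwise every $H^{(k)}$ is connected; being a nested intersection of connected compacta, $L$ is then connected. As $z_0$ is a trivial point, its connected component in $K$ is $\{z_0\}$, and since $L\subset K$ is connected and contains $z_0$ we must have $L=\{z_0\}$: the point $z_0$ is the only point of $K$ lying over its own $A$-projection.

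\emph{The main obstacle.} The delicate case is thus the pinched one, in which every column over $z_0$ is connected and the fibre collapses to the single point $z_0\in F$. This is exactly where non-degeneracy is indispensable: because $K$ is non-degenerated it is contained in no $(d-1)$-face, so for each $j\in B$ the $j$-th coordinate projection of $K$ is a self-similar set that genuinely spreads away from the value $b_j$ fixing $F$. I would use this spread, against the collapse of the fibre over $z_0$, to locate at some level a column whose cells fail to reach the corner $b$ of $F$ in one of the coordinates $j\in B$, so that $H_\sigma\cap F=\emptyset$, and then feed this $\sigma$ into Lemma \ref{another-z1}; an alternative is to pass to the projected sponge $\pi_A(K)\subset[0,1]^r$, which is again non-degenerated, and run an induction on the ambient dimension $d$. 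Making either of these precise --- ruling out that every column meets $F$ while the fibre through $z_0$ degenerates to a point --- is the step I expect to require the most care, and it is precisely the point at which the hypothesis cannot be weakened, in agreement with Example \ref{exam:counter}, where the conclusion fails once the slicing property is dropped.
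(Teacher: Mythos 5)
Your framework is the same as the paper's: Lemma \ref{another-z1} plus Lemma \ref{dim-increase} as the ``increment engine'', and an induction/maximality argument on the dimension of the containing face. That part of your proposal is correct, including the observation that $\Phi_{\omega^*}(z_0)\notin F$ rules out the alternative $F'=F$ in Lemma \ref{dim-increase}. The genuine gap is the case you yourself flag as ``the main obstacle'': when every column $H_{\sigma|_k}$ over $z_0$ is connected, you never actually produce a word $\sigma$ with $H_\sigma$ disconnected or $H_\sigma\cap F=\emptyset$, you only gesture at two unexecuted strategies. This is not a routine verification that was left out; it is the core of the theorem, and it is exactly where the paper does the real work. (Your fibre computation is also slightly off as stated --- one only has $\bigcap_k H_{\sigma|_k}\subset K\cap\pi_A^{-1}(\pi_A(z_0))$, since a point of the fibre may lie only in cells whose $A$-projection is an adjacent grid box --- but that inclusion is all you use, and in any case the collapse of the fibre plays no role in the paper's argument.)

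The missing idea is the following prepending trick. Since $z_0$ is trivial, the connected components $C_p$ of $z_0$ in the approximations $K_p$ shrink to $\{z_0\}$, so one can fix $p$ with $\diam(C_p)<\frac13$, which gives the separation property \eqref{eq:Cp}: $C_p\cap(C_p+a)=\emptyset$ for all $a\in\Z^d\setminus\{\mathbf 0\}$. In the hard case $H_{\sigma|_p}$ is connected and contains $z_0$, hence $H_{\sigma|_p}\subset C_p$. Non-degeneracy supplies a digit $j\in\Sigma$ with $\Phi_j([0,1]^d)\cap F=\emptyset$. Now consider the column of the \emph{prepended} word $j\sigma_1\dots\sigma_p$: one has $H_{j\sigma_1\dots\sigma_p}=\bigcup_{i\in\Sigma_j}\Phi_i(H_{\sigma|_p})$, and by \eqref{eq:Cp} the copies $\Phi_i(H_{\sigma|_p})\subset\Phi_i(C_p)=M^{-1}(C_p+\bd_i)$ are pairwise disjoint. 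Hence $U=\Phi_j(H_{\sigma|_p})$ is an entire connected component of $H_{j\sigma_1\dots\sigma_p}$ and $U\cap F=\emptyset$; so either that column is disconnected or it avoids $F$, and Lemma \ref{another-z1} fires. Note that this argument searches among \emph{all} columns, not just those over $z_0$'s own coding, and it uses the triviality of $z_0$ a second time (to get the diameter bound), not merely to start the induction --- neither of your two sketches (the ``spread'' of the $B$-coordinate projections, or induction on the ambient dimension via $\pi_A(K)$) contains this mechanism, and without it the pinched case remains open.
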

\begin{proof}Let  $F$ be the containing face of $z_0$ and suppose that $\dim F=r<d$. Let $(A,B)$ be the
ordered partition   which defines $F$. Let $\sigma=(\sigma_i)_{i\ge 1}\in\Sigma^\infty$ be a coding of $z_0$, and let $\sigma|_k$ denotes the prefix of $\sigma$ with length $k$. Then for each $k>0$, $z_0\in H_{\sigma|_k}\cap F$, where $H_{\sigma|_k}$ is defined in \eqref{H}.
We claim that  $K$ contains a  trivial point of the form $\Phi_\omega(z_0), \omega\in\Sigma^*$, and it is not in $F$.

Now we assume that $H_{\sigma|_k}$ is connected for all $k\geq 1$, for otherwise,
  the claim holds by Lemma \ref{another-z1}.
 Let $p$ be an integer such that $C_p$, the connected component of $K_p$ containing $z_0$, satisfies $\text{diam}(C_p)<\frac{1}{3}$. Then
 \begin{equation}\label{eq:Cp}
C_p\cap(C_p+a)=\emptyset \text{ for all } a\in\mathbb{Z}^d\setminus\{\mathbf 0\}.
\end{equation}
  That $H_{\sigma|_p}$ is connected implies  that $H_{\sigma|_p}\subset C_p$.
  Since $K$ is not degenerated, there exist $j\in\Sigma$ such that
   \begin{equation}\label{eqcapF}
   \Phi_j([0,1]^d)\cap F=\emptyset.
\end{equation}

  We consider the set $H_{j\sigma_1\dots\sigma_p}$. Let $\Sigma_j=\{i\in\Sigma;\pi_A(\Phi_i(\mathbf{0}))=\pi_A(\Phi_j(\mathbf{0}))\}$. It is easy to see that $H_{j\sigma_1\dots\sigma_p}=\bigcup_{i\in\Sigma_j}\Phi_i(H_{\sigma|_p})$.
 By \eqref{eq:Cp} and \eqref{eqcapF}, we infer that $U:=\Phi_j(H_{\sigma|_p})$ is a connected component of $H_{j\sigma_1\dots\sigma_p}$
  and $U\cap F=\emptyset$. Therefore, by Lemma \ref{another-z1}, there exists $\omega^*\in\Sigma_{j\sigma_1\dots\sigma_p}$ such that $\Phi_{\omega^*}(z_0)\notin F$ and it is a trivial point of $E$. The claim is proved.

Now by Lemma \ref{dim-increase}, the containing face of $\Phi_{\omega^*}(z_0)$  has dimension no less than $r+1$. Inductively, we can  find a trivial point $z^*$ whose containing face is $[0,1]^d$.
\end{proof}

\begin{proof}[\textbf{Proof of Theorem \ref{thm:inner}}]
  It follows directly from Theorem \ref{thm:homeo} and Theorem \ref{thm:trivial-points}.
\end{proof}

%\section{\textbf{Proof of Theorem \ref{thm:subIFS}}}
Let $\Lambda=\Lambda_{\Phi(\mathcal {D})}$ be a slicing self-affine sponge in $\R^d$.
Let $$\Lambda_k=\bigcup_{\omega\in \SD^k}\phi_\omega([0,1]^d)$$ be the $k$-th approximation of $\Lambda$.
Let $U$ be a connected component of $\Lambda_k$, following
 \cite{HR20}, we call $U$ a $k$-th \emph{island} if $U\cap\partial[0,1]^d=\emptyset.$

\begin{proof}[\textbf{Proof of Theorem \ref{thm:subIFS}}]
First, let us assume $\Lambda_{\Phi(\mathcal {D})}$ is not degenerated.
%Let us denote by $\Lambda_k$ the $k$-th approximation of $\Lambda_{\Phi(\mathcal {D})}$.

Since $\Lambda_{\Phi(\mathcal {D})}$ contains a trivial point, by Theorem \ref{thm:inner}, $\Lambda_{\Phi(\mathcal {D})}\cap (0,1)^d$ also possesses trivial points.
Consequently,  $\Lambda_k$ has a $k$-th island for $k$ large; for a careful proof of this fact, we refer to
\cite{HR20}. Without loss of generality, we assume that $\Lambda_{\Phi(\mathcal {D})}$ has a $1$-island  and we denote it by $U$. Write $U = { \cup _{\bd \in J}}{\phi _\bd} ( {{{\left[ {0,1} \right]}^d}}  )$, where $J \subset \SD$.

Let $\mathcal {D}'=\mathcal {D}\setminus J$ and let $\Lambda_{\Phi(\mathcal {D}')}$ be the slicing self-affine sponge determined by $\mathcal {D}'$.
Clearly, if a coding of a point $x$  having  infinitely many entries in $J$, then $x$ is a trivial point.
 Let $P$ be the connected part of $\Lambda_{\Phi(\mathcal {D})}$, then
$
%\begin{equation*}
%\label{eq:PP}
P\subset \bigcup_{\omega\in \cup_{k\geq 0}\SD^k}\phi_\omega(\Lambda_{\Phi(\mathcal {D}')}).
%\end{equation*}
$

Next, if $\Lambda_{\Phi(\mathcal {D})}$ is degenerated, then it
can be identified with  a lower dimensional slicing self-affine sponge which is non-degenerated, so   \eqref{eq:dropp} still holds.
%This finishes the proof.
\end{proof}

\section{\textbf{Connected indices of fractals  in Example \ref{examE}}}

Let $E$ and $E'$ be the Bedford-McMullen carpets in Example \ref{examE}. In this section we calculate the
connected indices of $E$ and $E'$.

\subsection{Graph-directed IFS}
%We first recall the definition of graph-directed sets \cite{MW88}.
 Let $(V,\Gamma)$ be a directed graph with a vertex set $V$ and directed-edge set $\Gamma$ where
both $V$ and $\Gamma$ are finite. The set of edges from $i$ to $j$ is denoted by $\Gamma_{i,j}$, and we assume that for any $i\in V$, there is at least one edge starting from vertex $i$. For each edge $e\in\Gamma$, there is a corresponding contraction $T_e:\mathbb{R}^n\rightarrow\mathbb{R}^n$.
 We call $(T_e)_{e\in \Gamma}$ a \emph{graph-directed IFS} (see \cite{MW88}). Its invariant sets, called \emph{graph-direct sets},
 are the unique non-empty compact sets $(K_i)_{i\in V}$ satisfying
$$
K_i=\bigcup\limits_{j\in V}\bigcup\limits_{e\in\Gamma_{i,j}}T_e(K_j),\quad  i\in V.
$$

Take any order of the vertex set $V$ and fix it.
The the $\emph{adjacency\ matrix}$ $A$ of a graph $(V,\Gamma)$ is defined  as  for any two vertices $v,w$ in $V$, $A_{v,w}$ is the number of edges in $G$ from $v$ to $w$.

\subsection{Connected part of $E$}
Let $M = \left( {\begin{array}{*{20}{c}}
   8 & 0  \\
   0 & 5  \\
\end{array}} \right)$.
 Let $\bD\subset \{0,1,\dots, 7\}\times\{0,1,\dots, 4\}$ be the digit set  illustrated in Figure \ref{K} (a). Then $E=K(M,\SD)$ is the  Bedford-McMullen carpet generated by the IFS  $\{\varphi_\bd\}_{\bd\in\SD}$
  where
  $\varphi_\bd(z)=M^{-1}(z+\bd).$

First, we determine the connected component of $K$ containing $\textbf{0}$.  Denote
\begin{equation}\label{JXY}
\begin{array}{ll}
J_{XY}=  \{(0,1),(0,2),(0,3)\},   &J_{XX}=\bD\setminus J_{XY};\\
J_{YY}= \{(7,0),(0,1),(0,2),(0,3),(7,4)\},  &J_{YX}=\bD\setminus J_{YY}.\\
\end{array}
\end{equation}
(See Figure \ref{K}.) Let $(X,Y)$ be the invariant sets of the graph-directed IFS given by Figure \ref{G}.
Then $X$ and $Y$ satisfy the set equations
$$%\begin{align*}%
X=\left(\bigcup\limits_{\bd\in J_{XX}}\varphi_\bd(X)\right)\cup\left(\bigcup\limits_{\bd\in J_{XY}}\varphi_\bd(Y)\right),\quad
Y=\left(\bigcup\limits_{\bd\in J_{YX}}\varphi_\bd(X)\right)
\cup\left(\bigcup\limits_{\bd\in J_{YY}}\varphi_\bd(Y)\right).
$$%\end{align*}%

\begin{figure}[H]
\subfigure[The first iteration of $X$.]{\includegraphics[width=4.5 cm]{kuang_X}}\quad\quad
\subfigure[The first iteration of $Y$.]{\includegraphics[width=4.5 cm]{kuang_Y}}
\caption{}
\label{K}
\end{figure}

\begin{figure}[H]
  \centering
  \includegraphics[width=6 cm]{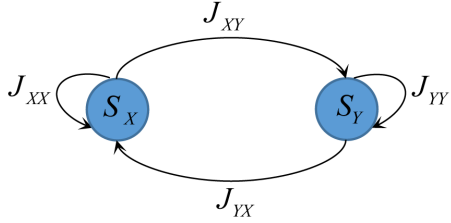}
\caption{The directed graph $(V,\Gamma)$, where $V=\{S_X, S_Y\}$.
 Each $\bd\in J_{XY}$ corresponds to an edge from $S_X$ to $S_Y$, and the corresponding  is $M^{-1}(z+\bd)$. The same holds for $J_{XX}, J_{YX}$ and $J_{YY}$.
 }
\label{G}
\end{figure}

\begin{lemma}\label{lem:Y}
 \emph{(i)} $Y$ is the connected component of $K$ containing $\mathbf{0}$;  \\
\indent\emph{(ii)} for any non-trivial connected component $C\ne Y$ of $K$, there exists $\omega\in\Sigma^*$ such that $C=\Phi_\omega(Y)$.
\end{lemma}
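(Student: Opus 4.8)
The plan is to prove (i) and (ii) together by first identifying $Y$ with the connected component $C_{\mathbf 0}$ of $K$ containing the origin, and then reducing every other non-trivial component to a copy of $Y$ by self-affinity. To begin I would check that $X$ and $Y$ are connected. Writing $(X_k,Y_k)$ for the approximations obtained by iterating the graph-directed maps from the seed $([0,1]^2,[0,1]^2)$, one has $X_{k+1}\subseteq X_k$, $Y_{k+1}\subseteq Y_k$ and $X=\bigcap_k X_k$, $Y=\bigcap_k Y_k$; since a decreasing intersection of continua is a continuum, it suffices to show each $X_k,Y_k$ is connected, which I would do by induction. The inductive step reduces to the finite adjacency fact that the first-level pieces appearing in the set equations for $X$ and for $Y$ form an edge-connected chain of cells, so that replacing each piece by a connected set keeps the union connected; this chain property is read off directly from $\mathcal D$ in Figure \ref{K}. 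Finally $\mathbf 0\in Y$ because the origin is the fixed point of $\varphi_{(0,0)}$ and lies in both $X$ and $Y$.

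Connectedness together with $\mathbf 0\in Y$ gives $Y\subseteq C_{\mathbf 0}$. For the reverse inclusion I would prove, by induction on $k$, that the union of the level-$k$ cells meeting $C_{\mathbf 0}$ is exactly $Y_k$; equivalently that the connected component of $\mathbf 0$ in the approximation $K_k$ coincides with $Y_k$. The transition rules of the graph-directed IFS are designed to state precisely this: a level-$(k+1)$ subcell of a cell of type $X$ (resp. $Y$) remains in the component of $\mathbf 0$ and inherits type $X$ or $Y$ according as its digit lies in $J_{XX}$ or $J_{XY}$ (resp. $J_{YX}$ or $J_{YY}$), the type recording which boundary edges of the cell carry points of $K$ and can therefore glue to a neighbouring cell. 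Letting $k\to\infty$ then yields $C_{\mathbf 0}=\bigcap_k Y_k=Y$, which is (i).

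For (ii), let $C\neq Y$ be a non-trivial component. Since $\diam C>0$ while level-$k$ cells shrink to points, there is a largest integer $k$ and a word $\omega\in\Sigma^{k}$ with $C\subseteq\Phi_\omega([0,1]^2)$; as $\Phi_\omega([0,1]^2)\cap K=\Phi_\omega(K)$, the set $C^\flat:=\Phi_\omega^{-1}(C)$ is a non-trivial component of $K$ contained in no single level-$1$ cell, i.e. it straddles level $1$. It then remains to prove the sub-claim that the only non-trivial component of $K$ straddling level $1$ is $Y$ itself: such a component must meet one of the inter-cell bridges at level $1$, every such bridge lies in $Y$ by the adjacency analysis above, and a maximal connected set meeting the component $Y$ must coincide with $Y$. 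Hence $C^\flat=Y$ and $C=\Phi_\omega(Y)$, as required.

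The step I expect to be the main obstacle is the combinatorial core shared by both parts: verifying from $\mathcal D$ that the graph-directed transition rules faithfully encode reachability, namely that two edge-adjacent cells are joined inside $K$ exactly when their shared edge carries points of $K$, and that the two types $X,Y$ exhaust the gluing patterns that can occur along $\partial[0,1]^2$. This bookkeeping — tracking which boundary rows and columns of each cell type are populated, and hence which shared edges survive to the limit — is where the specific geometry of $\mathcal D$ and the distinction between $J_{\bullet X}$ and $J_{\bullet Y}$ enter, and is the part requiring the most care.
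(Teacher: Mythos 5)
Your plan has the right overall shape, and for what it is worth the paper itself supplies no argument here (it declares the proof ``exactly the same'' as Lemma 5.1 of \cite{HR20}), so your attempt must stand on its own. It does not, because one step genuinely fails: you take the displayed set equations at face value, i.e.\ with $J_{XX}\cup J_{XY}=\mathcal{D}$ and $J_{YX}\cup J_{YY}=\mathcal{D}$. Under that reading the pair $(K,K)$ satisfies the graph-directed equations, so by uniqueness of graph-directed attractors $X=Y=K$; equivalently, your approximations satisfy $X_k=Y_k=K_k$ for every $k$. Your key induction, ``the connected component of $\mathbf{0}$ in $K_k$ coincides with $Y_k$'', then asserts that every $K_k$ is connected, hence that $K=\bigcap_k K_k$ is connected --- false, since $E$ has trivial points. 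What you missed is visible in the paper's own adjacency matrix: the column corresponding to $Y$ sums to $14+5=19<20=\#\mathcal{D}$, so exactly one digit must be absent from $Y$'s equation (the printed $J_{YX}=\mathcal{D}\setminus J_{YY}$ is a typo). Reconstructing $\mathcal{D}$ from the stated dimensions, rows $0$ and $4$ and column $0$ are full and the remaining digit is $(7,2)$, whose cell touches no other cell of $\mathcal{D}$ and is therefore isolated already in $K_1$. That isolated cell is exactly what disconnects $K$, produces the trivial points, and distinguishes the two states: $X$ is the trace of the component in a cell that has material glued to its right edge (so its subcell $(7,2)$ survives), while $Y$ is the trace in a cell with nothing to its right (so its subcell $(7,2)$ falls away). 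An argument that never confronts this cell proves too much.

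Second, the verifications you defer to ``reading off the figure'' are not bookkeeping; they are the lemma. What must be extracted from $\mathcal{D}$ is: $K$ contains the bottom, top and left edges of $[0,1]^2$ (a connected frame through $\mathbf{0}$), whereas $K\cap(\{1\}\times[0,1])$ is only a Cantor set; vertically adjacent cells glue along full segments and horizontally adjacent ones along nonempty Cantor sets, and every such gluing set connects into the frame, hence lies in $Y$. Only with this does your bridge sub-claim in part (ii) hold --- in general, a point of $K$ on a shared edge need not lie in any non-trivial component, so the claim is not formal. Finally, the identity $\Phi_\omega([0,1]^2)\cap K=\Phi_\omega(K)$, which you use to transport $C$ back to a component of $K$, is false whenever the cell indexed by $\omega$ has a right neighbour: the neighbour's full left edge lies in $K$ and on the shared boundary, while only a Cantor subset of that edge lies in $\Phi_\omega(K)$. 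Hence $\Phi_\omega^{-1}(C)$ need not even be a subset of $K$, and you must first show (again via the frame geometry) that a component confined to one cell contains no points of a neighbouring copy. Each of these repairs is feasible, but as written the proposal is not a proof.
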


The prove of the above lemma is exactly the same as the proof of Lemma 5.1 in \cite{HR20}, so we omit it.

\subsection{Connectedness indices of $E$} Kenyon and Peres \cite{KP96sofic} studied the Hausdorff dimension and
box dimension of such graph-directed sets related to Bedford-McMullen carpets.
The adjacency matrix of the graph IFS \eqref{JXY}is
$$
A=\left[ {\begin{array}{cc}
   \#J_{XX} & \#J_{YX}  \\
   \#J_{XY} & \#J_{YY}  \\
\end{array}} \right]=\left[ {\begin{array}{*{20}{c}}
   {17} & {14}  \\
   3 & 5  \\
\end{array}} \right],
$$
where $\#E$ denotes the cardinality of a set $E$.
The box dimension of $X$ and $Y$ are given by
$$\dim _B X=\dim_B Y = \frac{\log \lambda}{{\log n}} + \left( {\frac{1}{{\log m}} - \frac{1}{{\log n}}} \right)\log s$$
where  $n,m$ is the expanding factors, $\lambda$ is the spectral radius of $A$, and $s$ is the number
of non-vacate rows of the digit set $\SD$.
 In our example,
$$\text{ind}_{B}(E)={\dim _B}Y = \frac{{\log \lambda }}{{\log 8}} + \left( {\frac{1}{{\log 5}} - \frac{1}{{\log 8}}} \right) \times \log 5\approx 1.662$$
where $\lambda=\frac{{22+\sqrt {312}}}{2}\approx 19.83$.

As for the Hausdorff dimension of $X$ and $Y$,    \cite{KP96sofic} proves that
$$
\dim_H X=\dim_H Y = \mathop {\lim }\limits_{k \to \infty } \frac{1}{k}{\log _m}{\sum\limits_{0 \le {i_1}, \ldots ,{i_k} \le m-1} {\left\| {{A_{{i_k}}} \cdot {A_{{i_{k - 1}}}} \cdot  \ldots  \cdot {A_{{i_1}}}} \right\|} ^{1/\sigma} }
$$
where  $A_j$ is the adjacent matrix with respect to the $j$-th row and $\sigma=\log n/\log m$.
In our example,
$${A_0} =A_4=
\left[ {\begin{array}{*{20}{c}}
   8 & 7  \\
   0 & 1  \\
\end{array}} \right], \ \
{A_1}= A_3= \left[ {\begin{array}{*{20}{c}}
   0 & 0  \\
   1 & 1  \\
\end{array}} \right],\ \
{A_2} = \left[ {\begin{array}{*{20}{c}}
   1 & 0  \\
   1 & 1  \\
\end{array}} \right].
$$
By numerical calculation, we have that $\text{ind}_H E=\dim_H Y \approx 1.61$.

\subsection{Connected indices of $E'$}
It is easy to show that the connected part of $E'$ is a Bedford-McMullen carpet indicated in Figure \ref{C}.
Hence
  $$
\text{ind}_{H}(E')=\log_5 13\approx1.54, \quad
\text{ind}_{B}(E')=\frac{{\log19+\left({1/\sigma-1}\right)\log5}}{{\log8}}\approx 1.640.
$$

\begin{figure}[H]
 \centering
  \includegraphics[width=4.6 cm]{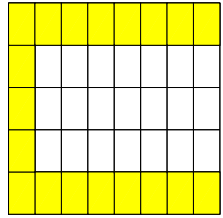}
\caption{The connected part of $E'$.}
\label{C}
\end{figure}

\section{\textbf{Trivial points of the fractal in Example \ref{exam:counter}}}
Let $E$ be the self-similar set in Example \ref{exam:counter}.
Denote $E^*=E\cup L$ where $L=\{1\}\times[1/2,1]$.
Let $H$ be the boundary of the convex hull of $E$, that is, $H$ is the trapezoid with vertices
$0, \mi, 1+\mi$ and $1+\mi/2$. Denote $\Sigma=\{1,\dots, 7\}$, see  Figure \ref{N}.
\begin{figure}[H]
 \includegraphics[width=6 cm]{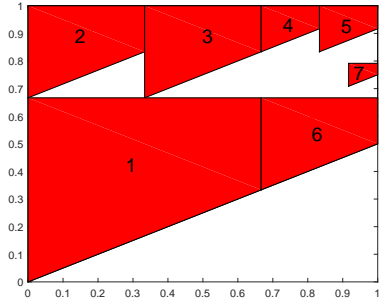}
\caption{The first iteration of $E$.}
\label{N}
\end{figure}

\begin{lemma}\label{lem:one} $E^*$ is connected.
\end{lemma}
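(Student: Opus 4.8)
The plan is to realize $E^*$ as a decreasing intersection of connected compact sets and to invoke the standard fact that such an intersection is connected. Define the operator $T(S)=\bigcup_{i=1}^{7}f_i(S)\cup L$ on compact subsets of the trapezoid $H=\mathrm{conv}(E)$, and set $E^*_0=H$ and $E^*_k=T(E^*_{k-1})$. Since each $f_i(H)\subseteq H$ and $L\subseteq H$, the sequence $(E^*_k)_k$ is nested decreasing. First I would check that $\bigcap_k E^*_k=E^*$: writing $f_\omega=f_{\omega_1}\circ\cdots\circ f_{\omega_k}$ for $\omega\in\Sigma^k$, one expands $E^*_k=\bigcup_{|\omega|=k}f_\omega(H)\cup\bigcup_{j<k}\bigcup_{|\omega|=j}f_\omega(L)$, whose first part shrinks to $E$ and whose second part increases to $\bigcup_{\omega\in\Sigma^*}f_\omega(L)$. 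Hence the identification reduces to the claim that $f_\omega(L)\subseteq E^*$ for every word $\omega$.

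The enabling observation is that $E$ contains the entire left edge $\{0\}\times[0,1]$ of $H$. Indeed, the vertical parts of $f_1$ and $f_2$ are $y\mapsto 2y/3$ and $y\mapsto y/3+2/3$, which tile $[0,1]$, while both maps preserve the line $x=0$; thus the left edge is the attractor of the restricted system $\{f_1,f_2\}$ and lies in $E$. Consequently every cell $f_\omega(E)$ contains its own full left edge $f_\omega(\{0\}\times[0,1])$. A short computation then shows that for $i\in\{1,2,3,4\}$ the right edge $f_i(L)$ of the $i$-th cell is contained in the left edge of the cell immediately to its right (namely $f_6,f_3,f_4,f_5$ respectively), hence in $E$; and for $i\in\{5,6,7\}$ one has $f_i(L)\subseteq L$ since those right edges lie on $x=1$. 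An easy induction on $|\omega|$ upgrades this to $f_\omega(L)\subseteq E^*$ for all $\omega$, which gives both $\bigcap_k E^*_k=E^*$ and the clean self-reference $E^*=\bigcup_{i=1}^{7}f_i(E^*)\cup L$.

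It then remains to prove by induction that each $E^*_k$ is connected. The base case $E^*_0=H$ is a trapezoid, hence connected. For the step, each $f_i(E^*_{k-1})$ is connected as a similar copy of $E^*_{k-1}$, so it suffices to show that the intersection graph of the eight pieces $f_1(E^*_{k-1}),\dots,f_7(E^*_{k-1}),L$ is connected. Using that $E^*_{k-1}$ contains both $L$ and the left edge $\{0\}\times[0,1]$ (the latter being preserved by $T$), each piece $f_i(E^*_{k-1})$ contains the right edge $f_i(L)$ together with the full left edge of its cell. The edge-matchings above then yield the cell adjacencies $\{1,2\},\{1,6\},\{2,3\},\{3,4\},\{4,5\}$, while $L$ meets exactly $f_5(E^*_{k-1}),f_6(E^*_{k-1}),f_7(E^*_{k-1})$ along $x=1$. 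This graph is connected, so $E^*_k$ is connected, completing the induction and hence the proof.

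The main obstacle, and the place where this example is genuinely delicate, is that adjacency of the trapezoidal cells does not by itself force the corresponding fractal sub-pieces to meet: two cells may share an edge while the parts of $E$ inside them stay apart, and indeed the bare prefractal $\bigcup_i f_i(H)$ is disconnected because $f_7(H)$ touches none of the other cells. The entire argument hinges on the left-edge-filling property of $E$, which guarantees that the pieces actually touch along the shared internal edges and that the isolated cell $f_7$ attaches to $L$. Verifying this property together with the precise containments $f_i(L)\subseteq(\text{left edge of the neighbouring cell})$ is the crux; the remaining steps are the routine nested-intersection machinery.
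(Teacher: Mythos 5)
Your proof is correct, and it takes a genuinely different---and in fact more careful---route than the paper. The paper argues in the same general spirit of connected prefractal approximation, but its seed is $H$, the boundary of the convex hull of $E$: it sets $H_n=\bigcup_{\omega\in\Sigma^n}f_\omega(H)$, asserts each $H_n$ is connected and contained in $E^*$, and concludes that $E^*$, ``as the limit of $H_n$ in Hausdorff metric,'' is connected. Taken literally, this runs into exactly the two issues your last paragraph isolates: the cell $f_7([0,1]^2)=[11/12,1]\times[17/24,19/24]$ is disjoint from every other first-level cell, so $f_7(H)$ is an isolated piece of $H_1$ and the sets $H_n$ are in fact \emph{not} connected unless $L$ is adjoined at each stage; moreover the Hausdorff limit of $\bigcup_{\omega\in\Sigma^n}f_\omega(H)$ is $E$, not $E^*$, since the slice $E\cap(\{1\}\times[0,1])$ is a Cantor set and hence $L\not\subset E$. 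Your modified operator $T(S)=\bigcup_{i}f_i(S)\cup L$ repairs both defects simultaneously: re-inserting $L$ at every generation is precisely what keeps the approximants connected (your adjacency list, together with the verifiable containments $f_i(L)\subset E$ for $i\in\{1,2,3,4\}$, $f_i(L)\subset L$ for $i\in\{5,6,7\}$, and $\{0\}\times[0,1]\subset E$, all check out) and what forces the limit set to be $E\cup L=E^*$ rather than $E$. The remaining difference is only in the limiting mechanism: you use that a decreasing intersection of compact connected sets is connected, while the paper uses that a Hausdorff limit of connected compacta is connected; both are standard, but your version buys the clean invariance identity $E^*=\bigcup_i f_i(E^*)\cup L$ and does not require knowing the limit set in advance, at the modest cost of the monotonicity bookkeeping for $T$.
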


\begin{proof} It is seen that  $H_1=\bigcup_{j=1}^7 f_j(H)$ is connected and it is a subset of  $E^*$.
Inductively, we see that for every $n\geq 1$,
$H_n=\bigcup_{\omega\in \Sigma^n}f_\omega(H)$ is connected and it is a subset
of $E^*$ . Therefore, $E^*$, as the limit of $H_n$ in Hausdorff metric, is  connected.
\end{proof}

 Let $\Sigma=A\cup B\cup C$ be a partition where
$A=\{1,2,3,4\}$, $B=\{5,6\}$ and $C=\{7\}$.
Let $X$ be the connected component of $E$ containing $0$.

\begin{lemma}\label{lem:two} Let $\omega=\omega_1\dots \omega_k\in \Sigma^k$ such that
$\omega_j\in B$ for $j<k$ and $\omega_k\in A$. Then
\begin{equation}\label{eq:XX}
f_\omega(E^*)\subset X.
\end{equation}
\end{lemma}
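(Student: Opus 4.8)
The plan is to prove Lemma \ref{lem:two} by induction on the length $k$, reducing it to two self-contained claims: (I) $f_j(E^*)\subset X$ for every $j\in A$, and (II) $f_j(X)\subset X$ for every $j\in B$. Granting these, the lemma is immediate: decomposing $f_\omega=f_{\omega_1}\circ\cdots\circ f_{\omega_{k-1}}\circ f_{\omega_k}$, claim (I) applied to the last letter gives $f_{\omega_k}(E^*)\subset X$ (here $\omega_k\in A$), and then claim (II) applied successively to $\omega_{k-1},\dots,\omega_1\in B$ keeps the image inside $X$, so that $f_\omega(E^*)\subset X$.

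Before that I would record two boundary facts by inspecting the induced one-dimensional systems. Among the seven digits only $f_1,f_2$ map the left edge $\{x=0\}$ into itself, and their $y$-parts $y\mapsto 2y/3$ and $y\mapsto y/3+2/3$ generate $[0,1]$; hence the left edge $E\cap(\{0\}\times[0,1])$ is the full segment $\{0\}\times[0,1]$. Likewise $f_2,f_3,f_4,f_5$ fix $\{y=1\}$ and their $x$-parts tile $[0,1]$, so the top edge is the full segment $[0,1]\times\{1\}$. These two segments meet at $\mi$, so their union is a connected subset of $E$ containing $0$, and therefore lies in $X$.

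The decisive point of claim (I), and the reason $E^*$ was introduced, is that $f_j(E)$ is itself \emph{disconnected}, so one cannot argue componentwise, whereas $f_j(E^*)$ \emph{is} connected because $E^*$ is (Lemma \ref{lem:one}). Thus for $j\in A$ it suffices to check that $f_j(E^*)\subset E$ and that $f_j(E^*)$ meets $X$, since a connected subset of $E$ meeting $X$ must lie in $X$. The inclusion reduces to $f_j(L)\subset E$, which I would verify by computing each $f_j(L)$ and identifying it with (a subsegment of) the left edge of a neighbouring cell: for instance $f_1(L)=\{2/3\}\times[1/3,2/3]=f_6(\{0\}\times[0,1])$, and similarly $f_2(L),f_3(L),f_4(L)$ are contained in the left edges of the cells $f_3,f_4,f_5$ respectively. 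Since the left edge lies in $E$, so do these images. Finally $f_j(E^*)$ meets $X$ because $f_1$ fixes $0$, while for $j=2,3,4$ the image $f_j([0,1]\times\{1\})$ is a subsegment of the top edge, which already lies in $X$.

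For claim (II) I would use that $X$ is connected by definition, so $f_j(X)$ is a connected subset of $f_j(E)\subset E$, and it is enough to produce one point of $f_j(X)\cap X$. For $j=5$ the fixed point $1+\mi$ lies on the top edge, hence in $X$, and $f_5(1+\mi)=1+\mi$; for $j=6$ we have $f_6(\mi)=(2+2\mi)/3=f_1(1+\mi)\in f_1(E)\subset X$ by (I), while $\mi\in X$. In both cases $f_j(X)$ meets $X$ and is therefore contained in it. The main obstacle throughout is exactly the disconnectedness of $E$: the whole strategy is designed to route around it by transporting the connected sets $E^*$ (for the $A$-digits) and $X$ (for the $B$-digits) rather than the disconnected $f_j(E)$. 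The only real computation is the two boundary-edge identities together with the four identities expressing $f_j(L)$, $j\in A$, as a cell's left edge, all of which are direct coordinate calculations.
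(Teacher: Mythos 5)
Your proof is correct, but it is organized differently from the paper's. The paper argues by induction on the word length $k$: its base case is your claim (I), established by observing that $\bigcup_{j\in A}f_j(E^*)$ is connected and contains $0$; in the inductive step, writing $\omega'_{k-1}=4$ if $\omega_{k-1}=5$ and $\omega'_{k-1}=1$ if $\omega_{k-1}=6$, it applies the induction hypothesis to the modified word $\omega_1\dots\omega_{k-2}\omega'_{k-1}$ and anchors $f_{\omega_1\dots\omega_{k-1}}(X)$ inside $X$ via the nonempty intersection $f_{\omega_1\dots\omega_{k-1}}(X)\cap f_{\omega_1\dots\omega_{k-2}\omega'_{k-1}}(E^*)$. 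You instead isolate the forward-invariance statement (II), $f_j(X)\subset X$ for $j\in B$, prove it once and for all using the fixed point $1+\mi$ of $f_5$ and the coincidence $f_6(\mi)=f_1(1+\mi)$, after which the lemma is mere composition, with no induction carrying a modified letter. The geometric content is the same in both arguments: the adjacency $f_6(\{0\}\times[0,1])=f_1(L)$ and the analogous contact between the $f_5$ and $f_4$ cells, which drive the paper's intersection step, are exactly the edge identities behind your anchor points. What your decomposition buys is modularity and transparency; moreover, you make explicit two facts the paper leaves implicit, namely that the left and top edges of $[0,1]^2$ lie in $E$ (hence in $X$), and that $f_j(L)\subset E$ for $j\in A$, without which the inclusion $f_j(E^*)\subset E$ — needed before one can invoke the fact that a connected subset of $E$ meeting the component $X$ lies in $X$ — is not justified. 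What the paper's route buys is that it never needs to formulate invariance of $X$ as a separate statement, only intersection facts between cylinder images, at the cost of a somewhat more convoluted induction.
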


\begin{proof} We prove the lemma by induction on the length of $\omega$.
By Lemma \ref{lem:one}, $\bigcup_{j\in A} f_j(E^*)$ is connected, so
$\bigcup_{j\in A} f_j(E^*)\subset X$. This proves that \eqref{eq:XX}holds for $k=1$.

Let $\omega'_{k-1}=4$ if $\omega_{k-1}=5$ and $\omega'_{k-1}=1$ if $\omega_{k-1}=6$.
By induction hypothesis,
$$f_{\omega_1\dots \omega_{k-2}\omega'_{k-1}}(E^*)\subset X.$$
On one hand, $f_{\omega}(E^*)\subset f_{\omega_1\dots \omega_{k-1}}(X)$,
on the other hand, the intersection of $f_{\omega_1\dots \omega_{k-1}}(X)$
and $f_{\omega_1\dots \omega_{k-2}\omega'_{k-1}}(E^*)$ is not empty.
Therefore,  $f_{\omega_1\dots \omega_{k-1}}(X)$, and also $f_\omega(E^*)$,
are subsets of $X$. The lemma is proved.
\end{proof}

\begin{thm}\label{thm:tri} If $x$ is a trivial point of $E$, then $x\in L$.
\end{thm}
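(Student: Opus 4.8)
The plan is to prove the contrapositive: if $x\in E\setminus L$, then $x$ lies in a non-trivial connected component of $E$. Write $\pi\colon\Sigma^\infty\to E$ for the coding map, and for $x=(x_1,x_2)$ call $x_1$ the horizontal coordinate. First I would translate ``$x\notin L$'' into a combinatorial condition on codings. Among the seven generators, $f_5,f_6,f_7$ are exactly those whose cells meet the right edge $\{x_1=1\}$, and their horizontal parts all fix $1$, whereas the horizontal parts of the four maps in $A=\{1,2,3,4\}$ send $[0,1]$ into $[0,5/6]$. Hence a point of $E$ has $x_1=1$ if and only if it has a coding in $\{5,6,7\}^\infty$, in which case its second coordinate lies in the attractor $Y^*$ of the vertical maps $y\mapsto\tfrac y6+\tfrac56$, $y\mapsto\tfrac y3+\tfrac13$, $y\mapsto\tfrac y{12}+\tfrac{17}{24}$. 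Since the lowest of the three corresponding image intervals is $[1/3,2/3]$, the minimum of $Y^*$ is forced to be the fixed point $1/2$ of that map, so $Y^*\subset[1/2,1]$ and therefore $E\cap\{x_1=1\}\subset L$. Consequently $x\notin L$ forces $x_1<1$, and then no coding of $x$ can lie in $\{5,6,7\}^\infty$; that is, every coding of $x$ contains a symbol in $A$.

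Now I would fix a coding $\sigma=\sigma_1\sigma_2\cdots$ of $x$ and pick $q$ with $\sigma_q\in A$. Let $p\le q$ be minimal such that $\sigma_p,\dots,\sigma_{q-1}\in B=\{5,6\}$ (allowing $p=q$), and set $\omega=\sigma_p\cdots\sigma_q$ and $\nu=\sigma_1\cdots\sigma_{p-1}$. Then $\omega\in B^{*}A$, so Lemma~\ref{lem:two} gives $f_\omega(E^{*})\subset X$, where $X$ is the connected component of $0$. Because $f_\omega(E^{*})\supset f_\omega(L)$, the component $X$ contains the non-degenerate segment $f_\omega(L)$ and is therefore non-trivial; and because $f_\nu$ is an injective similarity mapping $E$ into $E$, the set $f_\nu(X)$ is a non-trivial connected subset of $E$. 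Finally $x=f_{\nu\omega}\bigl(\pi(\sigma_{q+1}\sigma_{q+2}\cdots)\bigr)$ lies in $f_\nu\bigl(f_\omega(E)\bigr)\subset f_\nu(X)$, so the component of $x$ in $E$ is non-trivial. By contraposition this proves that every trivial point of $E$ belongs to $L$.

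The step that requires care is the passage from $E^{*}$ to $E$ in the middle paragraph. Lemma~\ref{lem:two} is phrased with the auxiliary set $E^{*}=E\cup L$, and the reason its conclusion is about the component $X$ of $E$ rather than merely of $E^{*}$ is that each attached segment $f_\omega(L)$ with $\omega\in B^{*}A$ in fact lies in $E$: the image of $L$ under any map in $A$ lands on the full left edge of a neighbouring cell, and these left (and top) edges are honest line segments contained in $E$. Verifying this edge incidence from the digit set of Figure~\ref{N}, together with the elementary computation $Y^*\subset[1/2,1]$ used above, is the only genuinely geometric input; everything else is the coding bookkeeping carried out here. I expect this edge incidence to be the main obstacle, since it is precisely what guarantees that the connecting segments are not lost when one passes from $E^{*}$ back to $E$, and hence that ``having a symbol in $A$'' really does produce a non-trivial connected subset of $E$ through the point $x$.
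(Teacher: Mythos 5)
Your proof is correct, and it rests on the same key input as the paper's --- Lemma \ref{lem:two} --- but the reduction to that lemma is genuinely different. The paper argues by contradiction: given a trivial point $x$ with coding $(\omega_k)_{k\geq 1}$, it takes the \emph{least} $k$ with $\omega_k\in A$ and splits into cases according to whether the symbol $7\in C$ occurs before position $k$; if it does, with $i$ the last such occurrence, the paper uses that the cylinder $f_{\omega_1\dots\omega_i}(E)$ is disjoint from the other rank-$i$ cylinders and that $f_{\omega_1\dots\omega_i}$ is injective, pulls $x$ back to the trivial point $y=f_{\omega_1\dots\omega_i}^{-1}(x)$ of $E$ whose coding begins with a word in $B^*A$, and only then contradicts Lemma \ref{lem:two}. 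You instead run the contrapositive and push non-triviality \emph{forward} rather than pulling triviality back: factoring the coding as $\nu\omega\sigma_{q+1}\sigma_{q+2}\cdots$ with $\omega\in B^*A$ and $\nu$ an arbitrary finite word, you note $x\in f_\nu(f_\omega(E))\subset f_\nu(X)$, and $f_\nu(X)$ is a non-trivial connected subset of $E$ merely because $f_\nu$ is injective and $f_\nu(E)\subset E$. This eliminates the case analysis on $C$ and any appeal to the isolation of $f_7$'s cell, so your argument needs strictly less geometric input (it would survive even if that cell touched its neighbours), and it does not require choosing the first occurrence of a symbol of $A$. You also prove, rather than assert, the closing implication that a coding in $(B\cup C)^\infty$ forces $x\in L$ (via $x_1=1$ and $Y^*\subset[1/2,1]$), a step the paper leaves implicit. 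Finally, the ``edge incidence'' issue you flag as the main remaining obstacle is real, but it is exactly the content of Lemmas \ref{lem:one} and \ref{lem:two}, which you are entitled to cite; it is not an additional gap in your argument.
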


\begin{proof} Let $(\omega_k)_{k\geq 1}$ be a coding of $x$.
We are going to show that  $\omega_k\not\in A$ for all $k\geq 1$, which implies  that $x\in L$.

Suppose on the contrary that $\omega_k\in A$  and $k$ is the least integer with this property.

If $\omega_j\not\in C$ for all $1\leq j<k$, then $\omega_j\in B$ for $1\leq j<k$. By Lemma \ref{lem:two}, $x\in f_\omega(E^*)\subset X$,
which  contradicts the fact that $x$ is a trivial point.

If $\omega_j\in C$ for some $1\leq j<k$, we set $i$ to be the greatest integer such
that $\omega_i\in C$. Notice that $x$ is a trivial point of $f_{\omega_1\dots \omega_i}(E)$
since the above cylinder is disjoint from other cylinders of rank $i$.
Moreover, $f_{\omega_1\dots \omega_i}: E \to f_{\omega_1\dots \omega_i}(E)$
is a bijection, so $y=f^{-1}_{\omega_1\dots \omega_i}(x)$ is a trivial point of $E$.
A coding of $y$ is $(\omega_j)_{j\geq i+1}$. By Lemma \ref{lem:two}, $y\in f_{\omega_{i+1}\dots \omega_k}(E^*)\subset X$,
which  contradicts the fact that $y$ is a trivial point of $E$.
The theorem is proved.
\end{proof}
\bigskip

%\textbf{Acknowledgement.} We are grateful to the anonymous referee for his valuable  suggestions.


\begin{thebibliography}{99}
\addcontentsline{toc}{chapter}{Bibliography}

\bibitem{BBE15}
R. Balka, Z. Buczolich, M. Elekes. \emph{A new fractal dimension: the topological Hausdorff dimension}, Adv. Math. \textbf{274} (2015), 881-927.

\bibitem{Baranski07}
K. Bara\'nski. \emph{Hausdorff dimension of the limit sets of some planar geometric constructions}, Adv. Math. \textbf{210} (2007), no.1, 215-245.

\bibitem{Bar07}
J. Barral, M. Mensi. {\it Gibbbs measures on self-affine Sierpi\'nski
             carpets and their singularity spectrum}, Ergod. Th. \& Dynam. Sys., \textbf{27}(5) (2007), 1419-1443.

\bibitem{Bed84}
T. Bedford. \emph{Crinkly curves, Markov partitions and dimension}, PhD Thesis, University of Warwick, 1984.

\bibitem{Das16}
T. Das, D.Simmons. \emph{The Hausdorff and dynamical dimensions of self-affine sponges : a dimension gap result}, Invent. Math. \textbf{210} (2017), no. 1, 85-134.

%\bibitem{KF90}
%K.J. Falconer. \emph{Fractal geometry}. Mathematical foundations and applications. John Wiley $\&$ Sons, Ltd., Chichester, 1990. xxii+288 pp.
%ISBN: 0-471-92287-0.

\bibitem{Feng} D.J. Feng, unpublished manuscript.

\bibitem{FengWang05}
D.J. Feng, Y. Wang.  \emph{A class of self-affine sets and self-affine measures}, J. Fourier Anal. Appl. \textbf{11} (2005), no.1, 107-124.

\bibitem{Fraser2017}
J.M. Fraser, D.C. Howroyd. \emph{Assouad type dimensions for self-affine sponges}, Ann. Acad. Sci. Fenn. Math. \textbf{42} (2017), no. 1, 149-174.

\bibitem{HR20}
L. Y. Huang, H. Rao. \emph{A dimension drop phenomenon of fractal cubes}, J. Math. Anal. Appl. \textbf{497} (2021), no. 2, 124918, 11 pp.

\bibitem{HZ20}
L.Y. Huang, Y. Zhang. \emph{Gap sequences of fractal cubes and Bedford-McMullen carpets}, arXiv:2011.10929[math.GT].

\bibitem{Hut81}
J.E. Hutchinson. \emph{Fractals and self-similarity}, Indiana Univ. Math. J. \textbf{30}(1981), no.5, 713-747.


\bibitem{JR11}
T. Jordan, M. Rams.\emph{Multifractal analysis for Bedford-McMullen carpets},
Math. Proc. Cambridge Philos. Soc. \textbf{150}(2011), 147-156.

\bibitem{Kanemaki}
A. Kanemaki, I.D. Morris. \emph{Structure of equilibrium states on self-affine sets and strict monotonicity of affinity dimension}, Proc.Lond. Math. Soc. (3) \textbf{116} (2018), no. 4, 929-956.

\bibitem{KP96measure}
R. Kenyon,Y. Peres. \emph{Measures of full dimension on affine-invariant sets}, Ergodic Theory Dynam. Systems \textbf{16}(1996), no. 2, 307-323.

\bibitem{KP96sofic}
R. Kenyon, Y. Peres. \emph{Hausdorff dimensions of sofic affine-invariant sets}, Israel J. Math. \textbf{94} (1996), 157-178.

\bibitem{King95}
J.F. King. \emph{The Singularity Spectrum for General Sierpi\'nski Carpets}, Adv. Math. \textbf{116}(1995), 1-11.


\bibitem{Lalley92}
S. Lalley, D. Gatzouras. \emph{Hausdorff and box dimensions of certain self-affine fractals}, Indiana Univ. Math. J. \textbf{41} (1992), no. 2, 533-568.


%\bibitem{MZ20}
%J.H. Ma, Y.F. Zhang. \emph{Topological Hausdorff dimension of fractal squares and its application to Lipschitz classification},Nonlinearity. \textbf{33} (2020), no.11, 6053-6071.

\bibitem{MM11}
J.M. Mackay. \emph{Assouad dimension of self-affine carpets}, Conform. Geom. Dyn. \textbf{15}(2011), 177-187.

\bibitem{MW88}
R.D. Mauldin, S.C. Williams. \emph{Hausdorff dimension in graph directed constructions}, Trans. Amer. Math. Soc. \textbf{309} (1988), no. 2, 811-829.

\bibitem{Mc84}
C. McMullen. \emph{The Hausdorff dimension of general Sierpi\'nski carpets}, Nagoya Math. J.
 \textbf{96} (1984), 1-9.

\bibitem{Miao2017}
J. J. Miao, L.F. Xi, Y. Xiong. \emph{Gap sequences of McMullen sets}, Proc.  Amer. Math. Soc., \textbf{145} (2017), no. 4,  1629-1637.

\bibitem{Olsen07}
L. Olsen. \emph{Symbolic and geometrical local dimensions of self-affine multifractal Sierpi\'nski sponges in $\mathbb{R}^d$}, Stoch. Dyn. \textbf{7} (2007), no. 1, 37-51.


\bibitem{Rao2019}
H. Rao, Y. M. Yang, Y. Zhang. \emph{The bi-Lipschitz
classification  of Bedford-McMullen carpets (I): Invariance of multifractal spectrum and measure preserving property}, Preprint (arXiv:2005.07451 [math.DS]), 2019.

\bibitem{Rock}
R.T. Rockafellar. \emph{Convex Analysis}, Princeton Mathematical Series, No. 28 Princeton University Press, Princeton, N.J. 1970 xviii+451 pp. 26.52 (46.00).



%\bibitem{Why58}
%G.T. Whyburn. \emph{Topological characterization of the Sierpi\'nski curve}. Fund.Math. \textbf{45} (1958), 320-324.


\bibitem{YZ20}
Y.F. Zhang. \emph{A Lower Bound of Topological Hausdorff Dimension of Fractal Squares}, Fractals. \textbf{28} (2020), issue 6, article no. 2050115.


\bibitem{ZG95}
G.M. Ziegler. \emph{Lectures on polytopes}, Graduate Texts in Mathematics, 152. Springer-Verlag, New York, 1995. x+370 pp.
%ISBN: 0-387-94365-X (Reviewer: Margaret M. Bayer) 52Bxx.


\end{thebibliography}
\end{document}